\theoremstyle{change}%
\newtheorem{definition}{Definition:}[section]%
\newtheorem{proposition}[definition]{Proposition:}%
\newtheorem{theorem}[definition]{Theorem:}%
\newtheorem{lemma}[definition]{Lemma:}%
{\theorembodyfont{\rmfamily}\newtheorem{remark}[definition]{Remark:}}%
{\theorembodyfont{\rmfamily}\newtheorem{example}[definition]{Example:}}%
\newenvironment{proof}
  {{\bf Proof:}}
  {\qquad \hspace*{\fill} $\Box$}%
\newcommand{\fg}{\mathfrak{g}}%
\newcommand{\fl}{\mathfrak{l}}%
\newcommand{\fn}{\mathfrak{n}}%
\newcommand{\fs}{\mathfrak{s}}%
\newcommand{\fh}{\mathfrak{h}}%
\newcommand{\fr}{\mathfrak{r}}%
\newcommand{\fe}{\mathfrak{e}}%
\newcommand{\fo}{\mathfrak{o}}%
\newcommand{\ad}{\operatorname{ad}}%
\newcommand{\inner}{\operatorname{int}}%
\newcommand{\cl}{\operatorname{cl}}%
\newcommand{\rme}{\mathrm{e}}%
\newcommand{\CC}{\mathcal{C}}%
\newcommand{\SC}{\mathcal{S}}%
\newcommand{\UC}{\mathcal{U}}%
\newcommand{\DC}{\mathcal{D}}%
\newcommand{\XC}{\mathcal{X}}%
\newcommand{\AC}{\mathcal{A}}%
\newcommand{\N}{\mathbb{N}}%
\newcommand{\R}{\mathbb{R}}%
\begin{document}

\title{Controllability of linear systems on Lie groups with finite semisimple center}
\author{Adriano Da Silva\footnote{Supported by Fapesp grant $n^o$ 2013/19756-8}\\
Instituto de Matem\'atica,\\
Universidade Estadual de Campinas\\
Cx. Postal 6065, 13.081-970 Campinas-SP, Brasil\\
and\\
V\'ictor Ayala\footnote{Supported by Proyecto Fondecyt $n^o$ 1150292, Conicyt}\\
Universidad de Tarapac\'a\\
Instituto de Alta Investigaci\'on\\
Casilla 7D, Arica, Chile.
}
\date{\today}
\maketitle

{\bf Abstract.} A linear system on a connected Lie group $G$ with Lie algebra $\fg$ is determined by the family of differential equations
$$\dot{g}(t)=\XC(g(t))+\sum_{j=1}^m u_j(t)X^j(g(t)),$$
where the drift vector field $\XC$ is a linear vector field induced by a $\fg$-derivation $\DC$, the vector fields $X^j$ are right invariant and $u\in\UC\subset L^{\infty}(\R,\Omega\subset\R^m)$ with $0\in\inner\Omega$. Assume that any semisimple Lie subgroup of $G$ has finite center and $e\in\inner\AC_{\tau_0}$, for some $\tau_0>0$. Then, we prove that the system is controllable if the Lyapunov spectrum of $\DC$ reduces to zero. The same sufficient algebraic controllability conditions were applied with success when $G$ is a solvable Lie group, \cite{DaSilva1}.

\bigskip
{\bf Key words.} Lie group, derivation, linear system, controllability.

\bigskip
{\bf 2010 Mathematics Subject Classification.} 16W25, 93B05, 93C05.

\section{Introduction}

Throughout the paper $G$ will stands for a connected Lie group with Lie algebra $\fg$. In \cite{VAJT} the authors introduced the class of linear system on $G$, determined by the family of differential equations
$$\dot{g}(t)=\XC(g(t))+\sum_{j=1}^m u_j(t)X^j(g(t)),$$
where the drift $\XC$ with flow $(\varphi_t)_{t\in\R}$, is a linear vector field induced by a $\fg$-derivation $\DC$ through the formula
$$\varphi_t(\exp Y)=\exp(\rme^{\DC}Y), \mbox{ for all } t \in \R,Y \in \fg.$$

The vector fields $X^j$ are right invariant and $u\in\UC \subset L^{\infty}(\R, \Omega\subset \R^m)$ is the class of admissible controls. Additionaly, if $\Omega$ is a compact, convex subset such that $0\in\inner\Omega$, the system is called {\it restricted}. Otherwise the systems is called {\it unrestricted}.

In \cite{VAJT} it is proved that the {\it ad-rank} condition
$$\mathrm{Span}\{X^j,\ad^k(X^j): j = 1, \ldots ,m \mbox{ and } k\geq 1\}=\fg$$
implies local controllability of the linear system from the identity element $e\in G$. In particular $e\in\inner\AC_{\tau}$ for any $\tau>0$. On the other hand, in \cite{VASM} the authors give a first example of a local controllable linear system from $e$ where the reachable set $\AC(e) = \AC$ is not a semigroup. Later in \cite{JPh} Jouan proves that $\AC$ is a semigroup if and only if $\AC = G$. All this facts show that it is hard to understand the class of linear systems from the controllability point of view. For example, controllability of invariant control system, i.e., when the drift below is also an invariant vector field, is a local property for connected groups. Recently, in \cite{DaSilva1} Da Silva shows that controllability property for linear systems on solvable Lie groups is obtained if $\AC$ is open and if any eigenvalue of the derivation $\DC$ has zero real part. Moreover, controllability of restricted systems on nilpotent Lie groups are a very exceptional issue since the above conditions are also necessary. Which means that the derivation has to be very unstable.

Our aim is to understand the controllability behavior of linear systems. In particular, to extend the results in $\cite{DaSilva1}$ as much as possible. It turns out that if the Lie group $G$ has finite semisimple center (see Definition \ref{finitess}) such extension is possible. Next, we state our main

\begin{theorem}
Let $G$ be a connected Lie group with finite semisimple center. Therefore, a linear system on $G$ is controllable if $e\in\inner\AC_{\tau_0}$, for some $\tau_0>0$, and $\DC$ has just eigenvalues with zero real part.
\end{theorem}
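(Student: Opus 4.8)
The aim is to prove $\AC:=\AC(e)=G$. I would first record two soft facts. From the cocycle property of solutions of a linear system -- if $u$ steers $e$ to $g$ in time $t$ and $v$ steers $e$ to $h$ in time $s$, then their concatenation steers $e$ to $h\,\varphi_s(g)$ in time $t+s$ -- one gets $\AC_{t+s}\supseteq\AC_s\cdot\varphi_s(\AC_t)$ and, since $e\in\AC_s$, the invariance $\varphi_s(\AC)\subseteq\AC$ for every $s\ge0$. Together with $e\in\inner\AC_{\tau_0}$ this makes $\AC$ open, arcwise connected, and a generating subset of $G$; applying the same reasoning to the (again linear) time-reversed system, the controllable set $\CC$ is open, has $e$ in its interior, and satisfies $\varphi_{-s}(\CC)\subseteq\CC$ for $s\ge0$. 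Secondly, accessibility propagates: the reachable set from an arbitrary $g$ is, up to the diffeomorphism $\varphi_t$, a right translate of the one from $e$, hence has nonempty interior, and likewise for the backward reachable set $\CC(g)$ from $g$. With these in hand the theorem reduces to a density statement: if $\AC$ is dense in $G$ then, for any $g\in G$, the set $\CC(g)$ has nonempty interior and so meets $\AC$ at some $h$; since $h\in\AC$ and $g$ is reachable from $h$, we conclude $g\in\AC$, hence $\AC=G$.

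So everything comes down to proving $\cl(\AC)=G$, and this is where the two hypotheses are used. Using the Levi decomposition $\fg=\fr\rtimes\fs$ and the fact that the radical $\fr$ is characteristic, $\DC(\fr)\subseteq\fr$, so $\DC$ descends to a derivation $\bar\DC$ of $\fg/\fr\cong\fs$ whose spectrum lies in that of $\DC$ and hence still consists of purely imaginary numbers. Let $R\trianglelefteq G$ be the closed connected radical. I would prove density on two layers. On the semisimple quotient $G/R$ the induced linear system has derivation $\bar\DC$, which must be inner, $\bar\DC=\ad(X_0)$ (all derivations of a semisimple Lie algebra are inner), and the purely imaginary spectrum forces $X_0$ to be an elliptic element. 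This is exactly the point where ``finite semisimple center'' is used: it guarantees that $\overline{\exp(\R X_0)}$ is a compact torus in $G/R$, so the drift flow on $G/R$ is conjugation by a relatively compact one-parameter group, hence recurrent; reducing to an invariant system via the inner structure of $\bar\DC$ and invoking the controllability of accessible invariant systems whose drift lies in a compact subgroup, the induced system on $G/R$ is controllable, so $\AC$ projects onto $G/R$. On the complementary layer, the subsystem governing trajectories that remain over the identity coset is a linear system on the \emph{solvable} group $R$, with derivation $\DC|_{\fr}$, whose eigenvalues again all have zero real part; by the solvable case of \cite{DaSilva1} this subsystem is controllable. Combining the two layers through a fibered scheme -- first steer to the correct coset of $G/R$, then correct the $R$-component, iterating to absorb the interference between the two corrections -- should give $\cl(\AC)=G$, and hence $\AC=G$ by the first paragraph.

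The step I expect to be hardest is this last gluing. Controllability of the induced system on $G/R$ and of the system in the fiber $R$ does not by itself yield controllability on $G$: the right-invariant control vector fields need not be compatible with the semidirect decomposition, and the fiber system seen over a coset other than the identity can genuinely differ from the one over $e$. To deal with it I would either choose a Levi subalgebra $\fs$ adapted to $\DC$ -- at least to its semisimple part $\DC_s$, for which such invariance can always be arranged -- so that the dynamics really respects $\fg=\fr\rtimes\fs$, or else bypass the quotient and argue directly that the iterated products $\varphi_{s_1}(V)\,\varphi_{s_2}(V)\cdots\varphi_{s_k}(V)$ with $0\le s_1\le s_2\le\cdots\le s_k$ and $V$ a fixed neighborhood of $e$ -- all of which lie in $\AC$ -- are dense, the recurrence coming from the elliptic semisimple part closing up the ``compact'' directions and the estimates of \cite{DaSilva1} closing up the radical directions. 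A minor, routine matter along the way is to check that $R$ is closed in $G$ and that each induced object is a bona fide linear system with $0$ interior to its control range.
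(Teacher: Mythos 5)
Your overall strategy (reduce to a decomposition of $G$ into a solvable and a semisimple layer, use the solvable result of \cite{DaSilva1} on one layer and a recurrence/compactness argument on the other) contains a step that is simply false, and it fails exactly on the cases the theorem is designed to cover. You claim that because $\bar\DC=\ad(X_0)$ has purely imaginary spectrum, $X_0$ must be elliptic, so that $\overline{\exp(\R X_0)}$ is a compact torus and the drift on the semisimple quotient is recurrent. But an element whose adjoint has purely imaginary (indeed zero) eigenvalues need not be semisimple: a nilpotent $X_0$ in $\mathfrak{sl}(2,\R)$ has $\mathrm{Spec}(\ad(X_0))=\{0\}$, is not elliptic, and $\exp(\R X_0)$ is a closed \emph{noncompact} one-parameter subgroup. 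This is precisely Example 3.12(ii) of the paper ($\DC=\ad(Y)$ on $SL(2,\R)$ with $Y$ strictly triangular), where the theorem applies but your recurrence argument collapses. The paper's actual mechanism for the semisimple part is entirely different: it restricts to the generalized kernel $\fg_0$ (where $\DC$ is nilpotent), passes to $L=G_0/R$ with $R$ the radical of $G_0$, builds the explicit subsemigroup $\SC=\bigcup_\tau\pi\bigl((G_0\cap\AC_\tau)\exp(\tau Y)\bigr)$ with nonempty interior, and invokes San Martin's Lemma 4.1 of \cite{SM3}: a subsemigroup of a noncompact semisimple group with \emph{finite center} containing the exponential of an ad-nilpotent element in its interior is the whole group. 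That lemma --- not compactness of a torus --- is where the finite semisimple center hypothesis does its work; the nonzero purely imaginary eigenvalues are handled separately through the nilpotent subalgebras $\fg^{\alpha}=\bigoplus_j\fg_{j\alpha}$ and Proposition \ref{solvablesubgroups}.

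The second gap is the one you flag yourself: the ``fiber system on $R$'' is not a well-defined linear system, since the right-invariant control fields $X^j$ need not be tangent to $R$, and even granting controllability on quotient and fiber there is no general principle that glues them into controllability on $G$. The paper never performs such a gluing. It stays inside $G$ throughout, showing successively that various $\varphi$-invariant subgroups lie in $\AC$ ($R\subset\AC$ by Proposition \ref{solvablesubgroups}, then $H=\langle\exp(\R Y+\fr)\rangle\subset\AC$ by Proposition \ref{idealinvariance}, then $G_0=(\AC\cap G_0)R\subset\AC$ by Lemma \ref{pointinvariance} once $\SC=L$), and finally concludes $G=G^0=\AC\cap\AC^*$ via the reverse system. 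Your opening reduction to density of $\AC$ is fine but unnecessary in that scheme. To repair your proposal you would essentially have to replace the elliptic/recurrence argument by the semigroup argument and abandon the quotient--fiber gluing in favor of the subgroup-absorption lemmas; at that point you would have reconstructed the paper's proof.
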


Furthermore, we introduce a special class of linear system which could be rele\-vant for applications. It is a linear system on a direct product of Lie groups. More precisely, linear on the first component and homogeneous on the second one. Through our main result we show that in positive time it is possible to connect any two configuration of a rolling sphere S over a revolving plane around the $z$-axis with constant angular velocity. The kinematic equations and the configuration space state $G=\R^2\times SO(3)$ appears in the book of Velimir Jurdjevic, \cite{VJ}.

As we mention, controllability property of linear systems is very unusual. Regarding that, it is neccessary to have a more convenient approach to this problem by study the existence of control sets and the possible number of these objects. We will submit a paper in this direction soon. 

The paper is organized as follows. Section 2 contains the definition of linear vector fields and linear systems on Lie groups. It also shows the decomposition of $\fg$ through the generalized eigenspaces of $\DC$ and the subgroups of $G$ associated with them. Finally, some basic properties of reachable sets of restricted linear systems are given. In Section 3 we prove the main results of the article which strongly depend on the fact that the Lie subgroup $G^0=\langle\exp(\fg^0)\rangle$ is contained in the reachable set $\AC$ when the group $G$ has finite semisimple center. Here $\fg^0$ is the sum of all generalized eigenspaces of $\DC$ for eigenvalues with zero real part. Section 4 contains a special class of linear systems which we believe could be relevant for concrete applications.

\section{Preliminares}

This section contains precise definitions and basic results concerning a linear vector field and linear systems. For more on the subjects the reader may consult \cite{VASM}, \cite{VAJT}, \cite{DaSilva1}, \cite{JPh}, \cite{JPhDM} and \cite{JPh1}.

\subsection{Linear vector fields and $\DC$-decomposition}

Let $G$ be a connected Lie group of dimension $d$ with Lie algebra $\fg$. The {\bf normalizer} $\eta$ of $\fg$, introduced in \cite{VAJT} in the control system set up, is defined by
$$\eta=\mathrm{norm}_{X^{\infty}(G)}(\fg)=\{F\in X^{\infty}(G); \,\mbox{ for all }Y\in\fg, \;\;[F, Y]\in\fg\}$$
where $X^{\infty}(G)$ denotes the Lie algebra of the $\CC^{\infty}$ vector fields on $G$. When $G$ is simply connected the authors proved that $\eta\simeq\fg\otimes_s\partial(\fg)$ and in the connected case, $\eta\simeq\fg\otimes_s\mathrm{aut}(\fg)$. Here $\partial(\fg)$ stands for the Lie algebra of $\fg$-derivations, $\mathrm{aut}(G)\subset\partial(\fg)$ denotes the Lie algebra of $\mathrm{Aut}(G)$, the group of $G$-automorphims, and $\otimes_s$ the semidirect product. The relevance of the normalizer comes from the fact that $\eta$ contains the complete dynamic of any linear system for the class of piecewise constant admisible control. In fact, for any constant control $u=(u_1, \ldots, u_m)\in\Omega$ the vector field $\XC+\sum_{j=1}^m u_jX^j$ belongs to the normalizer (See Definition 4). Precisely,

\begin{definition}
A vector field $\XC$ on $G$ is called {\it linear} if it belongs to $\eta$ and if $\XC(e)=0$.
\end{definition} 

In \cite{JPh} the author gives equivalent conditions for a vector field on $G$ to be linear.

\begin{theorem}
\label{propertieslinear}
Let $\XC$ be a vector field on a connected Lie group $G$. The fo\-llowing conditions are equivalent:
\begin{itemize}
\item[1.] $\XC$ is linear;
\item[2.] The flow of $\XC$ is a one parameter group of automorphisms of $G$;
\item[3.] $\XC$ satisfies
$$\XC(gh)=(dL_g)_h\XC(h)+(dR_h)_g\XC(g), \;\;\mbox{ for all }\;\;g, h\in G, $$
where as usual $L_g$ and $R_h$ denotes the left and right translation by $g$ and $h$, respectively, and $d$ its derivative.
\end{itemize}
\end{theorem}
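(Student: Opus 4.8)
The plan is to close the cycle of implications $(1)\Rightarrow(2)\Rightarrow(3)\Rightarrow(1)$, which yields the three-way equivalence. For $(1)\Rightarrow(2)$, I would start from the normalizer condition $[\XC,Y]\in\fg$ for every $Y\in\fg$: restricting $\ad(\XC)$ to $\fg$ produces a linear endomorphism $\DC$ of $\fg$, and the Jacobi identity forces $\DC$ to be a derivation. Since $\XC$ is linear, the identification $\eta\simeq\fg\otimes_s\mathrm{aut}(\fg)$ available in the connected case places $\DC$ in $\mathrm{aut}(\fg)$, the Lie algebra of $\mathrm{Aut}(G)$; hence $\rme^{t\DC}$ integrates to a one-parameter group $\Phi_t\in\mathrm{Aut}(G)$, which is automatically complete and satisfies $\Phi_t\circ\exp=\exp\circ\,\rme^{t\DC}$. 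To identify $\Phi_t$ with the flow of $\XC$ I would note that its infinitesimal generator $\mathcal{Y}$ vanishes at $e$ and has the same associated derivation $\DC$, so that $\XC-\mathcal{Y}$ vanishes at $e$ and commutes with every right-invariant vector field; such a field is left-invariant, hence zero. Therefore $\varphi_t=\Phi_t$ is a group of automorphisms.

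The two remaining implications are short. For $(2)\Rightarrow(3)$ I would differentiate the homomorphism identity $\varphi_t(gh)=\varphi_t(g)\varphi_t(h)$ at $t=0$, using that the differential of the multiplication map at $(g,h)$ sends $(v,w)$ to $(dR_h)_g v+(dL_g)_h w$; the left-hand side produces $\XC(gh)$ and the right-hand side produces exactly $(dL_g)_h\XC(h)+(dR_h)_g\XC(g)$. For $(3)\Rightarrow(1)$, setting $g=h=e$ gives $\XC(e)=2\XC(e)$, whence $\XC(e)=0$. To obtain $\XC\in\eta$ I would compare, for fixed $g,h$, the curves $t\mapsto\varphi_t(gh)$ and $t\mapsto\varphi_t(g)\varphi_t(h)$: both issue from $gh$ and, by condition 3, both solve $\dot x=\XC(x)$, so uniqueness of integral curves makes the local flow a local homomorphism fixing $e$. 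Consequently $\varphi_{-t}$ pushes a right-invariant field $Y$ to the right-invariant field determined by $(d\varphi_{-t})_e Y(e)$, so that $(\varphi_{-t})_*Y$ is a smooth curve in the finite-dimensional space $\fg$; its derivative at $t=0$ is $[\XC,Y]\in\fg$, giving $\XC\in\eta$.

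The main obstacle is $(1)\Rightarrow(2)$, precisely the passage from the infinitesimal datum $\DC$ to a genuine global automorphism group of $G$. In the connected but not simply connected case an abstract derivation need not integrate, so it is crucial that $\DC$ lands in $\mathrm{aut}(\fg)$ and not merely in $\partial(\fg)$; this is exactly what the structure of $\eta$ in the connected case provides. The subsequent identification of $\Phi_t$ with the flow of $\XC$ reduces to the lemma that a linear vector field is determined by its derivation, whose only delicate ingredient is that a vector field commuting with all right-invariant fields must be left-invariant. The other two implications involve only the flow property and the differential of the product map, and are routine.
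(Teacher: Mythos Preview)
The paper does not actually prove Theorem~\ref{propertieslinear}; it simply attributes the result to Jouan \cite{JPh} and states it without argument. So there is no ``paper's own proof'' to compare against, and your task reduces to whether your outline stands on its own.

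Your implications $(2)\Rightarrow(3)$ and $(3)\Rightarrow(1)$ are fine: differentiating the homomorphism identity is the standard way to get condition~3, and for the converse you correctly observe that $g=h=e$ forces $\XC(e)=0$, and that condition~3 makes $t\mapsto\varphi_t(g)\varphi_t(h)$ an integral curve of $\XC$ through $gh$, so the local flow is multiplicative and pushes right-invariant fields to right-invariant fields, giving $[\XC,Y]\in\fg$.

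The one place to be careful is your $(1)\Rightarrow(2)$. You invoke the identification $\eta\simeq\fg\otimes_s\mathrm{aut}(\fg)$ to conclude that $\DC\in\mathrm{aut}(\fg)$. But unpacking what that isomorphism says, the $\mathrm{aut}(\fg)$ summand \emph{is} precisely the set of linear vector fields, identified with the Lie algebra of $\mathrm{Aut}(G)$ via their flows. So the statement ``$\DC$ lies in $\mathrm{aut}(\fg)$, hence integrates to a one-parameter subgroup of $\mathrm{Aut}(G)$'' is essentially a restatement of $(1)\Rightarrow(2)$ rather than a proof of it. Within the logical order of this paper the isomorphism is quoted from \cite{VAJT} as an independent prior result, so your appeal to it is formally permissible here; but as an autonomous proof of the theorem it is close to circular. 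A cleaner route is to run your own $(3)\Rightarrow$ ``local flow is multiplicative'' argument directly from $(1)$: once you know $[\XC,Y]\in\fg$ for all right-invariant $Y$, you can show the local flow conjugates right translations, hence is a local automorphism, and then use connectedness and the group structure to extend to a global complete flow in $\mathrm{Aut}(G)$. That avoids leaning on the structure of $\eta$ and makes the argument self-contained.
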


\bigskip
The last condition shows a kind of ``linear property" of $\XC$. This fact comes also from the linear case on Euclidean space. Just observe that the flow determined by any matrix $A$ of order $d$ satisfy $\rme^{tA} \in \mathrm{Aut}(\R^d)$.

Denotes by $(\varphi_t)_{t\in\mathbb{R}}$ the 1-parameter group of automorphisms of $G$ generated by $\XC$. In particular, $\XC$ is complete and induces the derivation $\DC$ of $\fg$ deﬁned by
$$\mathcal{D}Y=-[\mathcal{X}, Y](e), \mbox{ for all }Y\in\mathfrak{g}.$$
The relation between $\varphi_t$ and $\DC$ is given by the well known formula
\begin{equation}
\label{derivativeonorigin}
(d\varphi_t)_e=\mathrm{e}^{t\mathcal{D}} \;\;\;\mbox{ for all }\;\;\; t\in\mathbb{R}
\end{equation}
which implies in particular that
$$\varphi_t(\exp Y)=\exp(\mathrm{e}^{t\mathcal{D}}Y), \mbox{ for all }t\in\mathbb{R}, Y\in\mathfrak{g}.$$
Conversely, given an element $l\in\mathrm{aut}(G)$ there exists a linear vector field $\XC=\XC(l)$ on $G$. Furthermore, in the simply connected case any $\fg$-derivation induces a linear vector field through the formula of the flow. In fact, $\varphi_t$ is a $G$-automorphism and since $G$ is connected any element of $G$ is a finite product of exponentials.

Let $\alpha$ be an eigenvalue of the derivation $\DC$ and consider the associated $\alpha$-generalized eigenspace $\DC$ given by 
$$\mathfrak{g}_{\alpha}=\{X\in\mathfrak{g}; (\mathcal{D}-\alpha)^nX=0 \mbox{ for some }n=n(\alpha)\geq 1\}.$$
It turns out that $[\mathfrak{g}_{\alpha}, \mathfrak{g}_{\beta}]\subset\mathfrak{g}_{\alpha+\beta}$ when $(\alpha+\beta)\in\mathrm{Spec}(\DC)$ and zero otherwise. This fact allows us to decompose $\fg$ as
$$\fg=\fg^+\oplus\fg^-\oplus\fg^0$$
where
$$\fg^+=\bigoplus_{\alpha\;: \,\mathrm{Re}(\alpha)> 0}\fg_{\alpha},\hspace{.8cm}\fg^-=\bigoplus_{\alpha\;: \,\mathrm{Re}(\alpha)< 0}\fg_{\alpha}\hspace{.8cm} \mbox{ and }\hspace{.8cm}\fg^0=\bigoplus_{\alpha\;: \,\mathrm{Re}(\alpha)=0}\fg_{\alpha}.$$
The subspaces $\fg^{\pm}, \fg^0$ are Lie algebras with $\fg^{\pm}$ nilpotent. We notice that the generalized kernel $\fg_0$ is itself a Lie subalgebra of $\fg$. 

\subsection{Linear systems on Lie groups}

Let $\Omega$ be a subset of $\R^m$ such that $0\in\inner\Omega$ and consider the class of admissible control functions $\UC\subset L^{\infty}(\R, \Omega\subset\R^m)$. 

\begin{definition}
A linear system on a Lie group $G$ is determined by the family of differential equations
\begin{equation}
\label{linearsystem}
\dot{g}(t)=\XC(g(t))+\sum_{j=1}^mu_j(t)X^j(g(t)),
\end{equation}
where the drift vector field $\XC$ is a linear vector field, $X^j$ are right invariant vector fields and $u=(u_1, \cdots,u_m)\in\UC$. 
\end{definition}

For $g\in G$, $u\in\mathcal{U}$ and $t\in\mathbb{R}$ the solution of (\ref{linearsystem}) starting at $g$ is given by
$$\phi_{t, u}(g)=L_{\phi_{t, u}}(\varphi_t(g))=\phi_{t, u}\varphi_t(g),$$
where $\phi_{t, u}:=\phi_{t, u}(e)$ is the solution of (\ref{linearsystem}) starting at the identity element $e\in G$ (see for instance Proposition 3.3 of \cite{DaSilva}). 
The map 
$$\phi:\mathbb{R}\times G\times\mathcal{U}\rightarrow G, \;\;\;(t, g, u)\mapsto\phi_{t, u}(g),$$
satisfies the {\it cocycle property}
$$\phi_{t+s, u}(g)=\phi_{t, \Theta_su}(\phi_{s, u}(g)))$$
for all $t, s\in\mathbb{R}$, $g\in G$, $u\in\mathcal{U}$, where $\Theta_t:\UC\rightarrow\UC$ is the shift flow $u\in\UC\mapsto\Theta_tu:=u(\cdot +t)$. It follows directly from the cocycle property that the diffeomorphism $\phi_{t, u}$ has inverse $\phi_{-t, \Theta_{t}u}$ for any $t\in\R$ and $u\in\UC$. Since for any $t>0$ we have that $\phi_{t, u}(g)$ depends only on $u|_{[0, t]}$ we obtain 
$$\phi_{t, u_2}(\phi_{s, u_1}(g))=\phi_{t+s, u}(g).$$
Here the control $u\in\UC$ is defined by concatenation between $u_1$ and $u_2$ as follows
$$u(\tau)=\left\{
\begin{array}{cc}
u_1(\tau) & \mbox{ for }\tau\in[0, s]\\
u_2(\tau-s) & \mbox{ for }\tau\in[s, t+s].
\end{array}\right.
$$

For any $g\in G$ the sets
\begin{equation}
\label{reachablesets}
\AC_{\tau}(g):=\{\phi_{\tau, u}(g), u\in\UC\}\;\;\;\mbox{ and }\;\;\;\AC(g):=\bigcup_{\tau>0}\AC_{\tau}(g),
\end{equation}
are the {\it set of reachable points from $g$ at time $\tau$}  and the {\it reachable set of $g$}, respectively. When $g=e$ is the identity element of $G$ the sets $\AC_{\tau}(e)$ and $\AC(e)$ are said to be the {\it reachable set at time $\tau$} and the {\it reachable set} and are denoted by $\AC_{\tau}$ and $\AC$ respectively.

\begin{definition}
The linear system (\ref{linearsystem}) on $G$ is said to be controllable if given  $g, h\in G$ there are $u\in\UC$ and $\tau>0$ such that $\phi_{\tau, u}(g)=h$.
\end{definition}

\begin{proposition}
\label{reachable}
It holds:
\begin{itemize}
\item[1.] if $0\leq \tau_1\leq \tau_2$ then $\AC_{\tau_1}\subset\AC_{\tau_2}$
\item[2.] for all $g\in G$, $\AC_{\tau}(g)=\AC_{\tau}\varphi_{\tau}(g)$;
\item[3.] for all $\tau, \tau'\geq 0$ we have $\AC_{\tau+\tau'}=\AC_{\tau}\varphi_{\tau}(\AC_{\tau'})=\AC_{\tau'}\varphi_{\tau'}(\AC_{\tau})$ and inductively that
$$\AC_{\tau_1}\varphi_{\tau_1}(\AC_{\tau_2})\varphi_{\tau_1+\tau_2}(\AC_{\tau_3})\cdots\varphi_{\sum^{n-1}_{i=1}\tau_i}(\AC_{\tau_n})= \AC_{\sum^n_{i=1}\tau_i}$$ 
for any positive real numbers $\tau_1, \ldots, \tau_n$;
\item[4.] for all $u\in\UC$, $g\in G$ and $t>0$ we have that $\phi_{t, u}(\AC(g))\subset\AC(g)$;
\item[5.] $e\in\inner\AC$ if and only if $\AC$ is open.
\end{itemize}
\end{proposition}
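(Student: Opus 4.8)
The plan is to derive all five assertions from three ingredients already available: the solution formula $\phi_{t,u}(g)=\phi_{t,u}\varphi_t(g)$, the concatenation identity $\phi_{t,u_2}(\phi_{s,u_1}(g))=\phi_{t+s,u}(g)$, and the observation that with $u\equiv 0$ the system reduces to $\dot g=\XC(g)$, whose flow $\varphi_t$ is an automorphism and hence fixes $e$; thus $\phi_{t,0}(e)=e$ and $e\in\AC_\tau$ for every $\tau\ge 0$, which is the seed for item 1.

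For item 1 I would take $h=\phi_{\tau_1,u}(e)\in\AC_{\tau_1}$ and prepend a zero-control block of length $\tau_2-\tau_1$: concatenating $u_1\equiv 0$ on $[0,\tau_2-\tau_1]$ with $u$ gives, by the concatenation identity, a control realizing $\phi_{\tau_1,u}(\phi_{\tau_2-\tau_1,0}(e))=\phi_{\tau_1,u}(e)=h$ at time $\tau_2$, so $h\in\AC_{\tau_2}$. Item 2 is a one-line consequence of the solution formula, since ranging $u$ over $\UC$ turns $\{\phi_{\tau,u}\varphi_\tau(g):u\in\UC\}$ into $\AC_\tau\,\varphi_\tau(g)$. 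For item 3 I would again use the concatenation identity: for fixed $u_1$ the point $p:=\phi_{\tau',u_1}(e)$ lies in $\AC_{\tau'}$, and as $u_2$ ranges over $\UC$ the solution formula makes $\phi_{\tau,u_2}(p)$ range over $\AC_\tau\,\varphi_\tau(p)$; since $u_1$ and $u_2$ may be chosen independently, $\AC_{\tau+\tau'}=\bigcup_{p\in\AC_{\tau'}}\AC_\tau\,\varphi_\tau(p)=\AC_\tau\,\varphi_\tau(\AC_{\tau'})$, and interchanging the two blocks gives the symmetric formula; the $n$-fold identity then follows by a straightforward induction on $n$.

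For item 4, if $h=\phi_{s,v}(g)\in\AC(g)$ with $s>0$, concatenating $v$ with $u$ produces $\phi_{t,u}(h)=\phi_{t+s,w}(g)\in\AC_{t+s}(g)\subset\AC(g)$. Finally, for item 5 the ``only if'' direction is immediate once one notes $e\in\AC$; conversely, assuming $V\subset\AC$ is an open neighborhood of $e$, I would fix $h=\phi_{\tau,u}(e)\in\AC$ with $\tau>0$, observe that $\phi_{\tau,u}$ is a diffeomorphism (its inverse being $\phi_{-\tau,\Theta_\tau u}$), so $\phi_{\tau,u}(V)$ is an open neighborhood of $h$, and invoke item 4 with $g=e$ to get $\phi_{\tau,u}(V)\subset\phi_{\tau,u}(\AC)\subset\AC$; hence $h\in\inner\AC$ and $\AC$ is open.

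I do not expect a genuine obstacle: the argument is essentially bookkeeping with the cocycle structure. The point requiring the most care is the independence of the two control blocks used in item 3 (and implicitly in items 1 and 4), which rests on the admissible class $\UC$ being closed under concatenation and time-translation, a property built into the setup; item 5 is the least mechanical step but is still only a short diffeomorphism argument.
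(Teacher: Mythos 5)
Your proof is correct and is essentially the standard cocycle/concatenation argument; the paper itself gives no proof of this proposition, deferring items 1--3 to Jouan (Proposition 2 of \cite{JPh}) and items 4--5 to \cite{DaSilva1} (Proposition 2.13), and those references argue exactly as you do (the zero control is admissible since $0\in\inner\Omega$ and fixes $e$, the solution formula $\phi_{t,u}(g)=\phi_{t,u}\varphi_t(g)$, concatenation for items 1, 3 and 4, and the fact that $\phi_{\tau,u}$ is a diffeomorphism for item 5). The only blemish is cosmetic: in item 5 you have interchanged the labels ``if'' and ``only if'' (noting $e\in\AC$ settles the direction $\AC$ open $\Rightarrow e\in\inner\AC$, while your diffeomorphism argument settles the converse), but both implications are in fact proved.
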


The proof of items 1. to 3.  can be found in \cite{JPh}, Proposition 2. The items 4. and 5. in \cite{DaSilva1} Proposition 2.13.

\begin{remark}
We notice that the item 4. of the above proposition together with the fact that $0\in\inner\Omega$ shows us in particular that $\AC$ is invariant by $\varphi_t$ for any $t\geq 0$. 
\end{remark}

\section{Controllability of linear systems}

In this section we extend the results obtained in \cite{DaSilva1} about controllability of linear systems on solvable  Lie groups to Lie groups with the following property:

\begin{definition}
\label{finitess}
Let $G$ be a connected Lie group. We say that the Lie group $G$ has finite semisimple center if all semisimple Lie subgroups of $G$ have finite center.
\end{definition}

\begin{remark}
We claim that there are many groups satisfying Definition \ref{finitess}. For instance, any solvable Lie group, any semisimple Lie group with finite center and any direct or semidirect product of these classes have semisimple finite center. Moreover, by Malcev's Theorem (see \cite{ALEB} Theorem 4.3) and its corollaries, any connected Lie group $G$ with one Levi subgroup $L$ with finite center also has semisimple finite center. 
\end{remark}

\bigskip
Let us denote by $G_0$, $G^+$, $G^-$, $G^0$, $G^{+, 0}$ and $G^{-, 0}$ the connected Lie subgroups of $G$ with Lie algebras $\fg_0$, $\fg^+$, $\fg^-$, $\fg^0$, $\fg^{+, 0}:=\fg^+\oplus\fg^0$ and $\fg^{-, 0}:=\fg^-\oplus\fg^0$, respectively. 

Next we state the main properties of the above subgroups. Its proof can be found in \cite{DaSilva1} Proposition 2.9.

\begin{proposition}
\label{subgroups}
It holds :
\begin{itemize}
\item[1.] $G^{+, 0}=G^+G^0=G^0G^+$ and $G^{-, 0}=G^-G^0=G^0G^-$;
\item[2.] $G^+\cap G^-=G^{+, 0}\cap G^-=G^{-, 0}\cap G^+=\{e\}$;
\item[3.] $G^{+, 0}\cap G^{-, 0}=G^0$;
\item[4.] All the above subgroups are closed in $G$;
\item[5.] If $G$ is solvable then 
$$G=G^{+, 0}G^-=G^{-, 0}G^+$$
Moreover, the fixed points of $\XC$ are in $G^0$;
\end{itemize}
\end{proposition}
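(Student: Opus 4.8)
The plan is to establish item~1 by a direct Lie-algebra computation, and then to reduce everything else to the \emph{closedness} of the subgroups listed above together with a dynamical description of $G^{\pm}$ and $G^{0}$ through the flow $(\varphi_{t})$; items~2, 3 and the solvable case~5 then follow essentially formally.

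\textbf{Item 1.} The relation $[\fg_{\alpha},\fg_{\beta}]\subseteq\fg_{\alpha+\beta}$ shows that $\fg^{+}$ is an ideal of $\fg^{+,0}=\fg^{+}\oplus\fg^{0}$: a bracket of an element of $\fg^{+}$ with one of $\fg^{0}$ lands in a generalized eigenspace for an eigenvalue of real part $(\text{positive})+(\text{zero})$, hence again in $\fg^{+}$. So the connected subgroup $G^{+}$ is normal in $G^{+,0}$, and $G^{+}G^{0}=G^{0}G^{+}$ is a subgroup; since the differential of the multiplication map $G^{+}\times G^{0}\to G^{+,0}$ at $(e,e)$ is $(Y,Z)\mapsto Y+Z$, which is onto $\fg^{+,0}$, this subgroup is open, hence equal to $G^{+,0}$ by connectedness of $G^{+,0}$. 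The same argument with $\fg^{-}\trianglelefteq\fg^{-,0}$ gives $G^{-,0}=G^{-}G^{0}=G^{0}G^{-}$.

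\textbf{Items 2--4.} On $\fg$ one has $\fg^{+}=\{Y:\rme^{t\DC}Y\to 0$ as $t\to-\infty\}$ and $\fg^{-}=\{Y:\rme^{t\DC}Y\to 0$ as $t\to+\infty\}$, while $\rme^{t\DC}$ grows subexponentially on $\fg^{0}$ in both time directions. Using $(\rmd\varphi_{t})_{e}=\rme^{t\DC}$, the formula $\varphi_{t}(\exp Y)=\exp(\rme^{t\DC}Y)$, and $\varphi_{t}\in\mathrm{Aut}(G)$, the plan is to show that $G^{+}=\{g:\varphi_{t}(g)\to e$ as $t\to-\infty\}$ and $G^{-}=\{g:\varphi_{t}(g)\to e$ as $t\to+\infty\}$ (the unstable and stable manifolds of the fixed point $e$), that $G^{0}$ is the corresponding centre subgroup, and --- crucially --- that $G_{0}$, $G^{\pm}$, $G^{0}$, $G^{+,0}$ and $G^{-,0}$ are all closed in $G$. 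Granting this, item~2 follows: an element of $G^{+}\cap G^{-}$ has $\varphi$-orbit tending to $e$ in both time directions, which by the hyperbolic estimates on $\fg^{+}\oplus\fg^{-}$ forces it to be $e$; using the factorization $G^{-,0}=G^{-}G^{0}$ of item~1 and comparing exponential growth rates along $\varphi_{t}$ gives $G^{+}\cap G^{-,0}=\{e\}$, and symmetrically $G^{-}\cap G^{+,0}=\{e\}$. Item~3 is then formal: $G^{0}\subseteq G^{+,0}\cap G^{-,0}$ is clear, and if $g\in G^{+,0}\cap G^{-,0}$ we write $g=ab$ with $a\in G^{+}$, $b\in G^{0}$, so that $a=gb^{-1}\in G^{-,0}$ (as $g\in G^{-,0}$ and $b^{-1}\in G^{0}\subseteq G^{-,0}$), hence $a\in G^{+}\cap G^{-,0}=\{e\}$ and $g=b\in G^{0}$. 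I expect \emph{closedness} to be the one genuinely delicate step: stable, unstable and centre manifolds are a priori only immersed, so one must exploit that $\varphi_{t}$ is an automorphism --- most cleanly by passing to the universal cover $\widetilde{G}$, where $\widetilde{G}^{\pm}$ are simply connected nilpotent, $\exp$ is a global diffeomorphism and the estimates are explicit --- and then descending to $G$.

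\textbf{Item 5.} Suppose $G$ solvable. Since $\fg=\fg^{+,0}\oplus\fg^{-}$ as a vector space with both summands subalgebras, $G^{+,0}G^{-}$ contains a neighbourhood of $e$; to upgrade this to $G=G^{+,0}G^{-}$ I would induct on the derived length of $\fg$, using that the last nonzero term $\fa$ of the derived series is a $\DC$-invariant abelian ideal: the factorization holds modulo the connected subgroup with Lie algebra $\fa$ (closed after passing to the universal cover) by the inductive hypothesis --- the base case $\fg$ abelian being immediate, as there every subgroup is the image of the corresponding subspace --- and it lifts through the associated fibration. The identity $G=G^{-,0}G^{+}$ is symmetric. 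Finally, $g$ is a fixed point of $\XC$ iff $\varphi_{t}(g)=g$ for all $t$; writing $g=g^{+}g^{0}g^{-}$ in the now global decomposition $G=G^{+}G^{0}G^{-}$, which is unique by item~2, and using that each $\varphi_{t}$ preserves the three factors, uniqueness gives $\varphi_{t}(g^{\pm})=g^{\pm}$ for all $t$; but $\varphi_{t}(g^{+})\to e$ as $t\to-\infty$ and $\varphi_{t}(g^{-})\to e$ as $t\to+\infty$, forcing $g^{+}=g^{-}=e$, i.e.\ $g\in G^{0}$.
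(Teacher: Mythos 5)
The paper does not prove this proposition itself --- it is quoted from \cite{DaSilva1}, Proposition 2.9 --- so your attempt can only be measured against that standard argument, whose overall strategy (describe $G^{\pm}$ and $G^{0}$ dynamically through the automorphism flow $(\varphi_t)$ and control closedness via the universal cover) you have correctly identified. Your item~1 is complete and correct. Your item~5 is essentially right, but the ``lifting through the fibration'' step needs one adjustment you do not spell out: writing $g=hka$ with $h\in G^{+,0}$, $k\in G^{-}$, $a\in A$, you must first replace $a$ by $kak^{-1}$ (legitimate since $A$ is normal) and only then split it inside $A^{+,0}A^{-}=\exp(\fa^{+,0})\exp(\fa^{-})$, because conjugation by $G^{-}$ does not preserve $A^{+,0}$: for $W\in\fg_{\beta}$ with $\mathrm{Re}(\beta)<0$, $\ad(W)$ maps $\fg_{\alpha}$ into $\fg_{\alpha+\beta}$ and so lowers real parts of eigenvalues.

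The genuine gap is in items~2--4. Everything there is made to rest on two assertions that you announce as a ``plan'' but never establish: (i) the identities $G^{\pm}=\{g\in G:\varphi_t(g)\rightarrow e$ as $t\rightarrow\mp\infty\}$, and (ii) the closedness of $G_0$, $G^{\pm}$, $G^{0}$, $G^{\pm,0}$ in $G$. These are the actual content of the proposition. In (i) only the inclusion $\subseteq$ is easy (write $g$ as a finite product of exponentials and use that $\varphi_t$ is an automorphism); the reverse inclusion, and the properness statement that your ``growth-rate'' argument for $G^{+}\cap G^{-,0}=\{e\}$ silently invokes --- namely that $\varphi_{-t}(b)$ leaves every compact set as $t\rightarrow+\infty$ whenever $b\in G^{-}\setminus\{e\}$ --- cannot be read off from Lie-algebra estimates alone on a non-simply-connected group. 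Passing to $\widetilde{G}$ does make the estimates explicit there, but you never perform the descent: the images in $G$ of the closed subgroups $\widetilde{G}^{\pm}$, $\widetilde{G}^{0}$ are a priori only immersed, and proving that they are closed (item~4, which is exactly what the rest of the paper uses) is where the work lies. As written, items~2 and~3 are deduced from unproved statements, so the proof is incomplete.
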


The next results from \cite{DaSilva1} relate $\varphi$-invariant subsets of $G$ with the reachable set. 

\begin{lemma}
\label{pointinvariance}
Let $g\in\AC$ and assume that $\varphi_t(g)\in\AC$ for any $t\in\R$. Then $\AC\, g\subset\AC$.
\end{lemma}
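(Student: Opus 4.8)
The plan is to exploit the semigroup-like structure of the reachable set encoded in Proposition \ref{reachable}, together with the hypothesis that $g$ is reachable at \emph{every} negative time as well as positive time (via the orbit condition $\varphi_t(g)\in\AC$). First I would fix an arbitrary $h\in\AC$, say $h\in\AC_{\sigma}$ for some $\sigma>0$, and aim to show $hg\in\AC$. The key identity is item 3 of Proposition \ref{reachable}, $\AC_{\tau+\sigma}=\AC_{\sigma}\varphi_{\sigma}(\AC_{\tau})$, which rewritten says that whenever $k\in\AC_{\sigma}$ and $p\in\AC_{\tau}$, the product $k\,\varphi_{\sigma}(p)\in\AC_{\tau+\sigma}\subset\AC$. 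So it suffices to produce, for suitable $\tau$, a point $p\in\AC_{\tau}$ with $\varphi_{\sigma}(p)=g$; then taking $k=h$ gives $hg\in\AC$.

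The point $p$ should be $p=\varphi_{-\sigma}(g)$. The hypothesis $\varphi_t(g)\in\AC$ for all $t\in\R$ gives $p=\varphi_{-\sigma}(g)\in\AC$, hence $p\in\AC_{\tau}$ for some $\tau>0$ (the reachable sets are nested in $\tau$ by item 1, so any such $\tau$ works, and we may enlarge it freely). Then $\varphi_{\sigma}(p)=\varphi_{\sigma}(\varphi_{-\sigma}(g))=g$ since $(\varphi_t)$ is a one-parameter group. Applying the identity above with $k=h\in\AC_{\sigma}$ yields
$$hg = h\,\varphi_{\sigma}(p)\in\AC_{\sigma}\varphi_{\sigma}(\AC_{\tau})=\AC_{\sigma+\tau}\subset\AC,$$
which is exactly what we want. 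Since $h\in\AC$ was arbitrary, $\AC g\subset\AC$.

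The one point that needs care — and which I expect to be the only real obstacle — is the bookkeeping of \emph{times}: item 3 of Proposition \ref{reachable} is stated for matching time parameters, so one must make sure the $\sigma$ appearing in $h\in\AC_{\sigma}$ is the same $\sigma$ used in $\varphi_{\sigma}$ and in $p=\varphi_{-\sigma}(g)$. This is harmless because $h$ determines $\sigma$ first, and then $p=\varphi_{-\sigma}(g)$ and the time $\tau$ with $p\in\AC_{\tau}$ are chosen afterward; the nestedness in item 1 guarantees we never need a time smaller than what is available. A secondary check is that $g\in\AC$ itself is used only to know the orbit $\{\varphi_t(g)\}$ meets $\AC$ — which is given outright — so no extra argument is needed there. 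Everything else is a direct substitution into the cocycle/reachable-set identities already recorded above.
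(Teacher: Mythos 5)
Your argument is correct: writing $g=\varphi_{\sigma}(\varphi_{-\sigma}(g))$ with $\varphi_{-\sigma}(g)\in\AC_{\tau}$ and applying item 3 of Proposition \ref{reachable} gives $hg\in\AC_{\sigma}\varphi_{\sigma}(\AC_{\tau})=\AC_{\sigma+\tau}\subset\AC$, which is exactly the standard argument. The paper itself states this lemma without proof (deferring to \cite{DaSilva1}), and your proof is the natural one built from the same concatenation identities the paper records, so there is nothing to object to.
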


\begin{proposition}
\label{idealinvariance}
Let $\fh$ be a Lie subalgebra of $\fg$ and $\fn$ an ideal of $\fh$ such that $\DC(\fh)\subset\fn$. If $N\subset\AC$ then $H\subset\AC$, where $H$ and $N$ are the connected Lie subgroups of $G$ with Lie subalgebras $\fh$ and $\fn$ respectively.
\end{proposition}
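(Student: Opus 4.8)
The plan is to show that $H\cap\AC$ is an open subsemigroup of the connected group $H$ which contains the identity, and is therefore all of $H$. Two facts will be used repeatedly: that $\AC$ is open (Proposition \ref{reachable}(5), since $e\in\inner\AC_{\tau_0}\subset\inner\AC$) and that $e\in\AC$ (since $e\in N\subset\AC$). It is also convenient to record at the outset that both $H$ and $N$ are invariant under every $\varphi_t$: from $\DC(\fh)\subset\fn\subset\fh$ one gets $\rme^{t\DC}(\fh)\subset\fh$ and $\rme^{t\DC}(\fn)\subset\fn$, and since $\varphi_t(\exp Y)=\exp(\rme^{t\DC}Y)$ while $H$ and $N$ are connected, this gives $\varphi_t(H)=H$ and $\varphi_t(N)=N$.

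First I would prove that $\AC N\subset\AC$. For each $n\in N$ we have $n\in\AC$ and, by the $\varphi$-invariance of $N$, $\varphi_t(n)\in N\subset\AC$ for all $t\in\R$; hence Lemma \ref{pointinvariance} yields $\AC n\subset\AC$, and taking the union over $n\in N$ gives $\AC N\subset\AC$.

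The technical core is to show that $\varphi_t(h)\in hN$ for every $h\in H$ and $t\in\R$. Given $Y\in\fh$, the hypothesis $\DC(\fh)\subset\fn$ gives $\rme^{t\DC}Y=Y+Z$ with $Z=(\rme^{t\DC}-\id)Y\in\fn$ (indeed $\DC^kY\in\DC(\fh)\subset\fn$ for every $k\ge1$). A Baker--Campbell--Hausdorff computation, valid for $Y$ small (which suffices), then shows $\exp(Y+Z)\exp(-Y)\in N$: the degree-one term of $\log(\exp(Y+Z)\exp(-Y))$ equals $Z\in\fn$, and each higher bracket, after expanding the first argument as $Y+Z$, is a sum of iterated brackets every one of which involves $Z$ at least once (an iterated bracket in $Y$ alone vanishes), hence lies in the ideal of $\fh$ generated by $Z$, which is contained in $\fn$ because $\fn$ is an ideal of $\fh$. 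Thus $\varphi_t(\exp Y)=\exp(\rme^{t\DC}Y)\in N\exp Y$; writing an arbitrary $h\in H$ as a product of such exponentials, and using that $\varphi_t$ is a homomorphism and that $N$ is normal in $H$, one concludes $\varphi_t(h)\in Nh=hN$.

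Combining the two previous steps: if $h\in H\cap\AC$ then $\varphi_t(h)\in hN\subset\AC N\subset\AC$ for all $t\in\R$, and $h\in\AC$, so Lemma \ref{pointinvariance} gives $\AC h\subset\AC$. Hence $H\cap\AC$ is closed under multiplication ($h_1,h_2\in H\cap\AC$ forces $h_1h_2\in\AC h_2\subset\AC$ and $h_1h_2\in H$), and being open in $H$ and containing $e$, it is an open neighbourhood of $e$ in $H$. Choosing a symmetric open set $V$ with $e\in V\subset H\cap\AC$, the union $\bigcup_{k\ge1}V^k$ is an open subgroup of the connected group $H$, hence equals $H$; on the other hand it is contained in $H\cap\AC$ since $H\cap\AC$ is a semigroup containing $V$. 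Therefore $H=H\cap\AC\subset\AC$. The step I expect to be the main obstacle is the one asserting $\varphi_t(h)\in hN$: one would like to simply pass to the quotient $H/N$, but $N$ need not be closed in $H$, so $H/N$ need not be a Lie group, and the triviality of $\varphi_t$ modulo $N$ must instead be extracted on generators via Baker--Campbell--Hausdorff; everything else is routine manipulation with Lemma \ref{pointinvariance} and the openness of $\AC$. (An alternative to the last paragraph is an induction on $\dim\fh-\dim\fn$, based on the observation that any Lie subalgebra $\fm$ with $\fn\subset\fm\subset\fh$ automatically has $\fn$ an ideal of $\fm$ and $\DC(\fm)\subset\fn$, so the hypotheses of the proposition are inherited.)
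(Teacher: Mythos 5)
Your argument is correct. Note that the paper itself offers no proof of this proposition --- it is imported from \cite{DaSilva1} --- so the only internal point of comparison is the computation in Section 3 where the authors establish $\varphi_{\tau}(h)=\varphi_{\tau}^Y(h)g$ with $g\in R$; your proof follows exactly that template (show $\varphi_t(\exp Y)\in \exp(Y)N$ on generators, then propagate to all of $H$ by connectedness and normality of $N$, then feed the resulting $\varphi$-stability into Lemma \ref{pointinvariance} and a semigroup-generation argument). The one genuine difference is that where the paper invokes Lemma 3.1 of \cite{WM} for the identity $\exp(Y+Z)\in\exp(Y)N$ when $Z\in\fn$, you derive it directly from Baker--Campbell--Hausdorff on small elements, correctly observing that every surviving term of the series for $\log(\exp(Y+Z)\exp(-Y))$ contains $Z$ and hence lies in the ideal $\fn$; this makes the step self-contained at no real cost. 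Two bookkeeping remarks: the proposition as stated carries no openness hypothesis, but your proof (like every use of the proposition in the paper) requires $\AC$ open, i.e.\ the standing assumption $e\in\inner\AC_{\tau_0}$; and your parenthetical worry about $H/N$ not being a Lie group is exactly why the generator-by-generator argument is the right move --- so the proof stands as written, modulo making the openness assumption explicit.
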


\begin{proposition}
\label{solvablesubgroups}
If $K\subset G^0$ is a connected $\varphi$-invariant solvable Lie subgroup of $G^0$ and $\AC$ is open then $G^0\subset\AC$.
\end{proposition}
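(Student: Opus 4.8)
The plan is to exhibit inside $\AC$ a generating set of $G^0$ consisting of elements whose whole $\varphi$-orbit lies in $\AC$. Since $\AC$ is open, Proposition \ref{reachable}, item 5, gives $e\in\AC$, and the Remark following Proposition \ref{reachable} gives $\varphi_t(\AC)\subset\AC$ for all $t\geq 0$. Hence, if $g\in\AC$ and moreover $\varphi_t(g)\in\AC$ for every $t\in\R$, Lemma \ref{pointinvariance} yields $\AC\,g\subset\AC$; applying this to finitely many such generators $g_1,\dots,g_k$ of $G^0$ gives $\AC\,g_1\cdots g_k\subset\AC$ and therefore $G^0=e\cdot G^0\subset\AC$. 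So the problem splits into producing enough elements with full $\varphi$-orbit in $\AC$, the delicate point being that ``full $\varphi$-orbit'' must be understood in $\AC$ and not merely in its closure.

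I would run this construction along the algebraic structure of $\fg^0$. Fix a Levi decomposition $\fg^0=\fs\oplus\fr$, $\fr$ the radical, and let $\fn\triangleleft\fg^0$ be the nilradical of $\fr$; both $\fr$ and $\fn$ are characteristic, hence $\DC$-invariant, and $\DC(\fr)\subset\fn$ because any derivation of a solvable Lie algebra maps it into its nilradical. With $R,N,S$ the corresponding connected subgroups ($R$ is $\varphi$-invariant, solvable, maximal such, and contains the given $K$; $G^0=\langle R\cup S\rangle$), the scheme is: (a) show $N\subset\AC$ with full $\varphi$-orbits, and deduce $R\subset\AC$ from Proposition \ref{idealinvariance} applied to $\fn\triangleleft\fr$ with $\DC(\fr)\subset\fn$; (b) show $S\subset\AC$ with full $\varphi$-orbits; (c) assemble $G^0\subset\AC$ by the argument of the first paragraph. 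Step (a) is essentially the solvable case of \cite{DaSilva1}: splitting $\DC|_{\fn}$ into its Jordan semisimple part $\DC_s$ and nilpotent part $\DC_n$, the eigenvalues of $\DC_s$ are purely imaginary so $\{\rme^{t\DC_s}\}$ lies in a compact torus of $\mathrm{Aut}(\fn)$; recurrence of this flow, together with the forward $\varphi$-invariance and openness of $\AC$, forces the $\varphi$-orbits of points of $N$ back into $\AC$, while the ``very unstable'' directions coming from $\DC_n$ are steered by the controls as in \cite{DaSilva1}. For step (b), the derivation induced by $\DC$ on $\fg^0/\fr\cong\fs$ is inner, say $\ad(Y_0)$; as its eigenvalues are purely imaginary, $Y_0$ is elliptic, and since $G$ has finite semisimple center the subgroup $S$ has finite center, so $\overline{\exp(\R Y_0)}$ is a compact torus and the induced flow on $S$ is again recurrent — feeding the same recurrence-plus-openness mechanism to conclude $S\subset\AC$.

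The main obstacle is step (b), and, inside it, the upgrade from ``the $\varphi$-orbit of $g\in\AC$ lies in $\cl(\AC)$'' (which recurrence gives for free) to ``it lies in the open set $\AC$''; this requires the control-theoretic regularity of reachable sets of linear systems (roughly, that $\AC$ agrees with the interior of its closure under the present hypotheses), in the spirit of \cite{DaSilva1}. It is precisely here that finiteness of the centers of the semisimple subgroups is indispensable: without it $\exp(\R Y_0)$ can be a non-closed one-parameter subgroup — as in the universal cover of $\mathrm{SL}_2(\R)$ — recurrence is lost, and the conclusion $G^0\subset\AC$ genuinely fails. A secondary difficulty, already present in the solvable setting, is handling a nontrivial $\DC_n$ in step (a), where pure recurrence no longer suffices and one must invoke the steering estimates of \cite{DaSilva1}.
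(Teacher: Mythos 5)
The paper never proves this proposition --- it is imported from \cite{DaSilva1} --- but the way it is used there pins down what it must say, and this is where your proposal goes astray. The displayed conclusion ``$G^0\subset\AC$'' is evidently a typo for ``$K\subset\AC$'': every invocation in the paper (the application to the subgroup of $\fg^{\alpha}$ immediately below it, the step ``$R$ is a connected $\varphi$-invariant solvable Lie subgroup\ldots{} therefore $R\subset\AC$'' in the proof of Proposition \ref{generalcase}, and the remark that ``any solvable $\varphi$-invariant subgroup of $G_0$ is contained in $\AC$'') concludes only that the solvable subgroup itself lies in $\AC$. Read this way, the proposition is a statement about $K$ alone and holds with no hypothesis on the semisimple subgroups of $G$; indeed it is stated \emph{before} the finite-semisimple-center assumption is put in force. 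You instead set out to prove the literal conclusion $G^0\subset\AC$ and, to do so, import the finite-semisimple-center hypothesis, which is not among the hypotheses here. You even remark that without that hypothesis the conclusion ``genuinely fails'' --- which should have been the signal that the statement was being misread, since the proposition cannot secretly depend on a hypothesis it does not carry.

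Even setting the misreading aside, the argument is circular relative to the paper's architecture and incomplete on its own terms. Your step (a), described as ``essentially the solvable case of \cite{DaSilva1}'', is precisely the content of the proposition you are being asked to prove (applied to $K=N$ and $K=R$), so the result is being assumed in order to prove it; and steps (b)--(c) reproduce the paper's own proof of Proposition \ref{generalcase}, which takes the present proposition as an input. The one genuinely hard analytic point --- upgrading ``the backward $\varphi$-orbit of $g\in\AC$ stays in $\cl(\AC)$'', which recurrence gives, to ``stays in $\AC$'', which Lemma \ref{pointinvariance} requires --- is correctly identified as the main obstacle but is then deferred to ``the spirit of \cite{DaSilva1}'' rather than carried out. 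Finally, a connected $\varphi$-invariant solvable subgroup $K\subset G^0$ need not be contained in the radical $R$ of $G^0$ (take $G^0=\mathrm{SL}(2,\R)$ and $K$ its upper-triangular subgroup), so your claim that $R$ ``contains the given $K$'' is false, and in fact the given $K$ plays no role anywhere in your argument --- another indication that the statement being proved is not the intended one.
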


Let $\alpha\neq 0$ be an eigenvalue of $\DC$ such that $\mathrm{Re}(\alpha)=0$ and consider the subspace of $\fg^0$ given by the sum of the multiples of $\alpha$ as
$$\fg^{\alpha}:=\bigoplus_{j\in\N}\fg_{j\alpha}.$$
Since $[\fg_{\alpha}, \fg_{\beta}]\subset\fg_{\alpha+\beta}$ when $\alpha+\beta$ is an eigenvalue of $\DC$ and zero otherwise, the subspace $\fg^{\alpha}$ is a nilpotent Lie subalgebra. If $\AC$ is open, we have by Proposition \ref{solvablesubgroups} that the connected subgroup of $G$ associated with the above Lie algebra is contained in $\AC$. In particular, $\exp (\fg_{\alpha})\subset\AC$ for any eigenvalue $\alpha\neq 0$ with zero real part.

\begin{remark}
The purpose of the paper is to extend the controllability results obtained in \cite{DaSilva1}, for solvable Lie groups. Since we proved that $\exp (\fg_{\alpha})\subset\AC$ for any eigenvalue $\alpha\neq 0$ of $\DC$ such that $\mathrm{Re}(\alpha)=0$, therefore we need to show that $G_0=\langle\exp(\fg_0)\rangle\subset\AC$. From Proposition \ref{solvablesubgroups} any solvable $\varphi$-invariant subgroup of $G_0$ is contained in $\AC$ we just need to take care of the semisimple component of $G_0$. In order to achieve our goal we construct a special semigroup and use Lemma 4.1 of \cite{SM3} that states: Any subsemigroup of a noncompact semisimple Lie group with finite center that contains a nilpotent element in its interior has to be the whole group.
\end{remark}

Let us assume that $e\in\inner\AC_{\tau_0}$ for some $\tau_0>0$. In the sequel we concentrate the study on Lie groups with finite semisimple center. Essentially, we generalize Proposition 3.5 above and Theorem 3.7 in \cite{DaSilva1}.

\bigskip

Since $\fg_0$ is $\DC$-invariant we can consider $\DC$ as a derivation of $\fg_0$. Let $\fr$ denote the solvable radical of the subalgebra $\fg_0$ and $R$ its associated connected solvable subgroup. Since the solvable radical is invariant under automorphisms we have that $\rme^{\tau\DC}\fr=\fr$ for any $\tau\in\R$ which implies that $\fr$ is $\DC$-invariant. Thus, we get a well defined derivation $\DC_*$ on the semisimple Lie algebra $\fl:=\fg_0/\fr$ with a commuting property, i.e., $\DC_*\circ\pi_*=\pi_*\circ\DC$, where $\pi_*:\fg_0\rightarrow \fl$ is the canonical projection. Moreover, since any derivation in a semisimple Lie algebra is inner there is $Y\in\fg_0$ such that $\DC_*=\ad_{\fl}(\pi_*(Y))$. 

By fixing $Y\in\fg_0$ above we have that $\DC^n(Z)-\ad(Y)^n(Z)\in\fr$ for any $n\in\N$ and $Z\in\fg_0$. Consequently, for any $\tau\in\R$ we have
\begin{equation}
\label{Rideal1}
\rme^{\tau\DC}Z=\rme^{\tau\ad(Y)Z}+W\;\;\;\mbox{ for some }\;\; W=W_{\tau, Y, Z}\in\fr.
\end{equation}
Consider the one parameter group of automorphisms of $G_0$ defined by 
$$\varphi_{\tau}^Y(h):=\exp(\tau Y)h\exp(-\tau Y).$$
Then, for any $Z\in\fg_0$ 
$$\varphi_{\tau}(\exp Z)=\exp(\rme^{\tau\DC}Z)\;\; \mbox{ and }\;\;\varphi_{\tau}^Y(\exp Z)=\exp(\rme^{\tau\ad(Y)}Z).$$ 
From equation (\ref{Rideal1}) and Lemma 3.1 of \cite{WM} we get 
$$\varphi_{\tau}(\exp Z)=\exp\left(\rme^{\tau\ad(Y)Z}+W\right)=\exp\left(\rme^{\tau\ad(Y)Z}\right)g=\varphi^Y_{\tau}(\exp Z)g,$$
where $g=g_{\tau, Y, Z}\in R$. 

Assume $h\in G_0$ of the form $h=h'\exp Z$ such that $\varphi_{\tau}(h')=\varphi_{\tau}^Y(h')g_1$ for some $g_1\in R$. It follows that $\varphi_{\tau}(\exp Z)=\varphi_{\tau}^Y(\exp Z)g_2$ for some $g_2\in R$. Therefore
$$\varphi_{\tau}(h)=\varphi_{\tau}(h'\exp Z)=\varphi_{\tau}(h')\varphi_{\tau}(\exp Z)=\varphi_{\tau}^Y(h')g_1\varphi_{\tau}^Y(\exp Z)g_2$$
$$=\varphi_{\tau}^Y(h')\varphi_{\tau}^Y(\exp Z)g=\varphi_{\tau}^Y(h'\exp Z)g=\varphi_{\tau}^Y(h)g$$
where $g=(\varphi_{\tau}^Y(\exp Z))^{-1}g_1\varphi_{\tau}^Y(\exp Z)g_2\in R$, because $R$ a normal subgroup of $G_0$.

Since $G_0$ is connected the above relation gives us by induction that 
\begin{equation}
\label{Rideal2}
\varphi_{\tau}(h)=\varphi_{\tau}^Y(h)g,\;\;\;\mbox{ for any } h\in G_0
\end{equation}
where  $g=g_{\tau, Y, h}\in R$.

Equation (\ref{Rideal2}) implies in particular that for any $\tau, \tau'\in\R$ it holds that 
$$\varphi_{\tau}(\exp(\tau' Y))=\exp(\tau'Y)g \;\;\mbox{ where } \;\;g=g_{\tau, \tau', Y}\in R.$$
The quocient $L:=G_0/R$ is a semisimple Lie group with Lie algebra $\fl$. Moreover, since we are assuming that $G$ has finite semisimple center, $L$ has finite center. Let us assume that $L$ is noncompact. By the previous analysis
$$\AC_{\tau}(\exp(\tau Y))=\AC_\tau\varphi_{\tau}(\exp(\tau Y))=\AC_{\tau}\exp(\tau Y)g, \;\;\;g\in R$$
consequently 
$$\pi(G_0\cap\AC_{\tau}(\exp(\tau Y)))=\pi((G_0\cap\AC_{\tau})\exp(\tau Y))$$ 
where $\pi:G_0\rightarrow L$ is the canonical projection. Consider for any $\tau>0$ the subsets of $L$ given by
$$\SC_{\tau}:=\pi((G_0\cap\AC_{\tau})\exp(\tau Y))\;\;\;\;\mbox{ and }\;\;\;\; \SC:=\bigcup_{\tau>0}\SC_{\tau}.$$

Since we are assuming $e\in\inner\AC_{\tau_0}$ for some $\tau_0>0$ we have that
$$\pi(\exp(\tau_0 Y))\in\pi((\inner\AC_{\tau_0}\cap G_0)\exp(\tau_0 Y))\subset\inner \SC$$ 
which implies that $\SC$ has nonempty interior in $L$. Next we show that $\SC$ is a subsemigroup of $L$, that is, $\SC$ is closed under group multiplication in $L$.

Let then $x_1, x_2\in\SC$. By definition, there are $\tau_i>0$, $u_i\in\UC$ such that 
$$x_i=\pi(\phi_{\tau_i, u_i}\exp(\tau_iY)) \;\;\mbox{ with }\;\; \phi_{\tau_i, u_i}\in\AC_{\tau_i}\cap G_0, \;\;\mbox{ for }\;\;i=1,2.$$ 
It follows that  
$$x_2x_1=\pi(\phi_{\tau_2, u_2}\exp(\tau_2Y)\phi_{\tau_1, u_1}\exp(\tau_1Y)).$$
Furthermore,
$$\phi_{\tau_1, u_1}(\exp((\tau_2+\tau_1)Y))=\phi_{\tau_1, u_1}\varphi_{\tau_1}(\exp((\tau_2+\tau_1)Y)$$
$$=\phi_{\tau_1, u_1}\exp((\tau_2+\tau_1)Y)g_1=\left(\phi_{\tau_1, u_1}\exp(\tau_1Y)\right)\exp(\tau_2Y)g_1$$
where $g_1\in R$. By considering the concatenation $u\in\UC$ between $u_1$ and $u_2$ we obtain 
$$\phi_{\tau_2+\tau_1, u}(\exp((\tau_2+\tau_1)Y))=\phi_{\tau_2, \Theta_{\tau_1}u}(\phi_{\tau_1, u}(\exp((\tau_2+\tau_1)Y)))$$
$$=\phi_{\tau_2, u_2}\varphi^Y_{\tau_2}(\phi_{\tau_1, u_1}(\exp((\tau_2+\tau_1)Y)))g_2=\phi_{\tau_2, u_2}\exp(\tau_2Y)\phi_{\tau_1, u_1}\exp(\tau_1Y)g$$
where $g=\varphi^Y_{\tau_2}(g_1)g_2\in R$. 

Therefore 
$$\phi_{\tau_2, u_2}\exp(\tau_2Y)\phi_{\tau_1, u_1}\exp(\tau_1Y)\in G_0\cap\AC_{\tau_2+\tau_1}(\exp(\tau_2+\tau_1)Y)g^{-1}$$
and since $G_0=G_0(\exp(\tau_2+\tau_1)Y)g^{-1}$ we get 
$$G_0\cap\AC_{\tau_2+\tau_1}(\exp(\tau_2+\tau_1)Y)g^{-1}=(G_0\cap\AC_{\tau_2+\tau_1})(\exp(\tau_2+\tau_1)Y)g^{-1}.$$
Finally, by taking the projection 
$$x_2x_1\in\pi((G_0\cap\AC_{\tau_2+\tau_1})\exp((\tau_2+\tau_1)Y))=\SC_{\tau_2+\tau_1}\subset\SC$$ 
showing that $\SC$ is a subsemigroup of $L$.

Since $\DC$ restricted to $\fg_0$ is nilpotent, by considering $X=\tau_0\,\pi_*(Y)$ we have that 
$$\exp_{L}(X)=\pi(\exp(\tau_0 Y))\in\inner\SC \;\;\;\mbox{ and }\;\;\;\ad_{\fl}(X)^j=0, \mbox{ for some }\; j\in\N.$$ 
By Lemma 4.1 of \cite{SM3} it follows that $\SC=L$.

\bigskip
We are now able to prove a generalization of Proposition \ref{solvablesubgroups} above.

\begin{proposition}
\label{generalcase}
Let $G$ be a connected Lie group with finite semisimple center. If $\AC$ is open, it follows that $G^0\subset\AC$.
\end{proposition}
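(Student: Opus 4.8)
The strategy is to bootstrap from the already-established facts to the full conclusion $G^0 \subset \AC$. The display preceding the statement has already done the crucial work on the semisimple side: it shows that when $\AC$ is open, the quotient group $L = G_0/R$ (the semisimple Levi quotient of the generalized kernel) is reached in the appropriate projected sense, namely $\SC = L$, both in the noncompact and — trivially, since a compact semisimple group with finite center is compact and one argues directly or by the same semigroup lemma — in the compact case. So the first step is to record that $\SC = L$ regardless of whether $L$ is compact or not; in the compact case $L$ itself is a (compact) group so the subsemigroup $\SC$ with nonempty interior is automatically all of $L$.

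The second step is to climb from $L$ back up to $G_0$. From $\SC = L$ and the definition $\SC_\tau = \pi((G_0 \cap \AC_\tau)\exp(\tau Y))$, together with equation (\ref{Rideal2}) relating $\varphi_\tau$ and $\varphi_\tau^Y$ modulo $R$, I would deduce that $\pi(G_0 \cap \AC) = L$, i.e. $(G_0 \cap \AC)$ projects onto $L$. Combined with Proposition \ref{solvablesubgroups} applied to the solvable radical — more precisely, $R$ is a connected solvable $\varphi$-invariant subgroup of $G_0$ (it is $\DC$-invariant as noted), hence $R \subset G^0$-style reasoning gives $R \subset \AC$ once we know $\AC$ is open and $R \subset G^0$; here one uses that $\fg_0 \subset \fg^0$ so $G_0 \subset G^0$ and $R \subset G^0$ — we obtain that $\AC$ contains $R$ and projects onto $L = G_0/R$. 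The remaining task is the standard "extension" argument: if a subset $S$ of a connected group $G_0$ contains the normal subgroup $R$ and $\pi(S) = G_0/R$, then $S = G_0$. One must check that $\AC \cap G_0$ is genuinely a union of fibers of $\pi$, which follows because $R \subset \AC$ and $\AC \cdot r \subset \AC$ for $r \in R$ — the latter from Lemma \ref{pointinvariance}, since $R$ is $\varphi$-invariant so each $r \in R$ has $\varphi_t(r) \in R \subset \AC$ for all $t$. This yields $G_0 \subset \AC$.

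The third step assembles $G^0$ from $G_0$ and the root subgroups. The remark preceding the statement already records that $\exp(\fg_\alpha) \subset \AC$ for every nonzero eigenvalue $\alpha$ with $\mathrm{Re}(\alpha) = 0$, via Proposition \ref{solvablesubgroups} applied to the nilpotent subalgebras $\fg^\alpha$. Since $\fg^0 = \fg_0 \oplus \bigoplus_{\alpha \neq 0,\, \mathrm{Re}\,\alpha = 0} \fg_\alpha$ and $G^0$ is generated by $\exp(\fg^0)$, it suffices to see that the group generated by $G_0$ together with all the $\exp(\fg_\alpha)$ lies in $\AC$. Here I would invoke Proposition \ref{idealinvariance} (or Lemma \ref{pointinvariance} repeatedly): each $\fg^\alpha$ is a subalgebra with $\DC$-action, and a suitable filtration argument — building $\fg^0$ up from $\fg_0$ by successively adjoining the $\fg_\alpha$'s, at each stage having an ideal sitting inside the reached set — promotes membership from the pieces to the whole. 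Concretely, one checks that the connected subgroup generated by the already-reached pieces is $\varphi$-invariant and applies the invariance lemmas to absorb the next generalized eigenspace.

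**Main obstacle.** The delicate point is the bookkeeping in Step 3: verifying that the pieces $G_0$ and the various $\exp(\fg_\alpha)$ can be combined — in the right order, using $\varphi$-invariance and the ideal structure $[\fg_\alpha,\fg_\beta] \subset \fg_{\alpha+\beta}$ — so that Proposition \ref{idealinvariance} or Lemma \ref{pointinvariance} actually applies at each stage to conclude the generated subgroup is inside $\AC$. One has to make sure that at each step the relevant subalgebra is a subalgebra, the relevant subgroup is closed and $\varphi$-invariant, and that $\DC$ maps it suitably into an ideal already contained in the reached set. The semisimple part (Steps 1–2), by contrast, is essentially finished by the construction of $\SC$ and the cited Lemma 4.1 of \cite{SM3}; the only thing to watch there is the compact case and the passage $\pi(G_0 \cap \AC) = L \Rightarrow G_0 \subset \AC$, which is routine group theory once $R \subset \AC$ is in hand.
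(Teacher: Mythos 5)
There are two genuine gaps. The first is in your Step 2, in the passage from $\SC=L$ to $\pi(G_0\cap\AC)=L$, which you present as following from the definition of $\SC_\tau$ and equation (\ref{Rideal2}). It does not: $\SC_\tau=\pi((G_0\cap\AC_\tau)\exp(\tau Y))$ is the projection of a \emph{right translate} of $G_0\cap\AC_\tau$ by $\exp(\tau Y)$, and that translated set need not meet $\AC$ at all, so $\SC=L$ by itself says nothing about $\pi(G_0\cap\AC)$. (You cannot undo the translation by multiplying on the right by $\exp(-\tau Y)$ either, because the time $\tau$ indexing $\AC_\tau$ and the amount of translation are coupled, and they vary with the point of $L$ you are trying to hit.) The paper supplies exactly the missing ingredient: it forms the subalgebra $\fh:=\R Y+\fr$, observes that $\DC(\fh)\subset\fr$ (since $\DC_*=\ad_{\fl}(\pi_*(Y))$ forces $\DC Y\in\fr$), applies Proposition \ref{idealinvariance} to get $H\subset\AC$, and then Lemma \ref{pointinvariance} to get $\AC\exp(\tau Y)\subset\AC$; only then does $\AC_\tau\exp(\tau Y)\subset\AC$ give $\SC\subset\pi(\AC\cap G_0)$, after which your fiber argument over $R$ is indeed routine. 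Your plan never constructs $\fh$, and without it the identity $\SC=L$ cannot be transferred back to $\AC$.

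The second gap is the one you yourself flag in Step 3. The proposed filtration that adjoins the $\fg_\alpha$ one at a time cannot be run through Proposition \ref{idealinvariance}: the individual spaces $\fg_\alpha$ with $\alpha\neq0$ and the partial sums $\fg_0\oplus\fg_{\alpha_1}\oplus\cdots\oplus\fg_{\alpha_k}$ are in general not Lie subalgebras (note $[\fg_\alpha,\fg_\alpha]\subset\fg_{2\alpha}$), so there is no chain of subalgebra/ideal pairs to which that proposition applies. The paper sidesteps this entirely: each set $\exp\fg_\alpha$ is $\varphi$-invariant and contained in $\AC$, so repeated use of Lemma \ref{pointinvariance} places the \emph{product set} $B=\prod_{\alpha:\,\mathrm{Re}(\alpha)=0}\exp\fg_\alpha$ (which is not a subgroup) inside $\AC$; since $B$ is a $\varphi$-invariant neighborhood of $e$ in $G^0$, Corollary 3.3 of \cite{DaSilva1} yields $G^0\subset\AC$. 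Finally, your treatment of the compact case --- a subsemigroup with nonempty interior of a connected compact group is the whole group --- is a correct alternative to the paper's argument (which instead shows $\DC_*\equiv0$, hence $\DC(\fg_0)\subset\fr$, and applies Proposition \ref{idealinvariance} directly, never needing $\SC$ there), but it does not repair either of the gaps above, and in the compact case it still leaves you with the unproved transfer from $\SC=L$ to $G_0\subset\AC$.
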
 

\begin{proof}
We first show that $G_0\subset\AC$. By considering our previous analysis we know that $R$ is a connected $\varphi$-invariant solvable Lie subgroup of $G^0$. Therefore Proposition \ref{solvablesubgroups} implies that $R\subset\AC$. Now, for the semisimple Lie group $L=G_0/R$ there are only two topological possibilities:
\begin{itemize}
\item[1.] $L$ is compact: In this case the Cartan-Killing form of $\fl$ is an inner product. Since any derivation is skew-simmetric with respect to this form and $\DC|_{\fg_0}$ is nilpotent, we must have that $\DC_*\equiv 0$ which implies that $\DC(\fg_0)\subset\fr$. 
Therefore, the result follows from Proposition \ref{idealinvariance}. 

\item[2.] $L$ is noncompact: For this case, we already proved that $\SC=L$. Consider the Lie subalgebra of $\fg_0$ defined by $\fh:=\R Y+\fr$. We notice that the sum is direct iff $Y\notin\fr$. Because of our choices $\DC(Y)\in\fr$. On the other hand $\fr$ is an ideal of $\fg_0$. It turns out that $\fh$ is a $\DC$-invariant Lie subalgebra of $\fg_0$ such that $\DC(\fh)\subset\fr$. Since $R\subset\AC$, Proposition \ref{idealinvariance} implies that $H\subset\AC$ where $H$ is the connected subgroup with Lie algebra $\fh$. Since $H$ is $\varphi$-invariant and is contained in $\AC$, Lemma \ref{pointinvariance} implies $\AC \,g\subset\AC$ for any $g\in H$ and consequently
$$\AC_{\tau}\exp(\tau Y)\subset\AC\exp(\tau Y)\subset\AC, \;\;\;\tau>0.$$
In particular, $\SC\subset\pi(\AC\cap G_0).$ Then, by using Lemma \ref{pointinvariance} again, we obtain
$$G_0=(\AC\cap G_0)R\subset\AC R\subset \AC.$$
\end{itemize}
So, in any case, $G_0\subset\AC$. Since $\exp(\fg_{\alpha})\subset\AC$ for any $\alpha\neq 0$ with $\mathrm{Re}(\alpha)=0$, Lemma \ref{pointinvariance} shows that 
$$B:=\prod_{\alpha\;: \mathrm{Re}(\alpha)=0}\exp\fg_{\alpha}\subset\AC.$$
Since $B$ is a $\varphi$-invariant neighborhood of the identity element in $G^0$ we obtain by Corollary 3.3 of \cite{DaSilva1} that $G^0\subset\AC$ concluding the proof.
\end{proof}

The next result generalizes Theorem 3.7 of \cite{DaSilva1}.

\begin{theorem}
\label{generalcase2}
Let $G$ be a connected Lie group with finite semisimple center. If $e\in\inner\AC_{\tau_0}$ for some $\tau_0>0$, then $G^{+, 0}\subset\AC$.
\end{theorem}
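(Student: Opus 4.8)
The plan is to bootstrap from the already-established fact $G^0 \subset \AC$ (Proposition \ref{generalcase}) up to $G^{+,0}$ by exploiting the nilpotency of $\fg^+$ together with the $\varphi$-invariance of $\AC$. First I would recall from the $\DC$-decomposition that $\fg^{+,0} = \fg^+ \oplus \fg^0$ is a Lie subalgebra in which $\fg^0$ is a subalgebra and $\fg^+$ is an ideal (since $[\fg^+,\fg^0]\subset\fg^+$ and $[\fg^+,\fg^+]\subset\fg^+$, because sums of eigenvalues with positive real part again have positive real part). Thus $G^{+,0}$ has $G^+$ as a closed normal subgroup with $G^{+,0}/G^+ \cong G^0$, and by Proposition \ref{subgroups} item 1, $G^{+,0} = G^+G^0 = G^0G^+$. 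So it suffices to prove $G^+ \subset \AC$, and then $G^{+,0} = G^0G^+ \subset \AC\cdot\AC$; but to conclude $\AC\cdot G^+\subset\AC$ I need Lemma \ref{pointinvariance}, which requires each point of $G^+$ to be $\varphi$-invariant-in-$\AC$, i.e. that $G^+$ is $\varphi$-invariant and contained in $\AC$. Since $\varphi_t(\exp Y)=\exp(\rme^{t\DC}Y)$ and $\fg^+$ is $\DC$-invariant, $G^+$ is indeed $\varphi$-invariant, so the whole difficulty reduces to showing $G^+\subset\AC$.

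To show $G^+\subset\AC$ I would argue by induction on the filtration of the nilpotent Lie algebra $\fg^+$ coming from ordering the eigenvalues by the size of their positive real parts. Concretely, list the distinct real parts $0 < a_1 < a_2 < \cdots < a_k$ occurring in $\operatorname{Spec}(\DC|_{\fg^+})$, and for each index $i$ let $\fn_i := \bigoplus_{\operatorname{Re}(\alpha)\ge a_i}\fg_\alpha$; this is a $\DC$-invariant ideal of $\fg^{+,0}$, with $\fn_i/\fn_{i+1}$ abelian and $\fn_k$ the ``most positive'' piece. On the top layer $\fn_k$, the key point is that $\DC$ acts with all eigenvalues having real part $a_k$, so there are no eigenvalue-matching obstructions: one can use the contraction property, namely that $\varphi_{-t}$ (equivalently, using $\phi_{-t,\Theta_t u}$) contracts $G^+$-directions as $t\to+\infty$. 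I would run the standard argument that, since $\AC$ is open and $\varphi_t$-invariant for $t\ge 0$ (Remark after Proposition \ref{reachable}), and since $\varphi_t$ expands the $\fg^+$ directions, a small $\varphi$-invariant neighborhood of $e$ inside $G^+$ sweeps out all of $G^+$ under forward iteration: for any $h\in G^+$, $\varphi_{-t}(h)\to e$, so $\varphi_{-t}(h)\in\AC$ for $t$ large, hence $h=\varphi_t(\varphi_{-t}(h))\in\varphi_t(\AC)\subset\AC$. The subtlety is making sense of $\varphi_{-t}(h)\to e$ at the group level (not just the Lie-algebra level) and ensuring the neighborhood can be taken $\varphi$-invariant; here I would invoke Proposition \ref{idealinvariance} layer by layer — starting from $R\subset\AC$ or $G^0\subset\AC$ as the base and climbing up the ideals $\fn_i$, at each stage using that $\DC$ maps $\fn_i$ into $\fn_{i+1}$ modulo lower terms is \emph{not} quite true, so instead I would use the cleaner statement that $\fg^+$ itself is a nilpotent $\DC$-invariant ideal and apply the solvable-case machinery of \cite{DaSilva1} (Corollary 3.3 there, already cited in the previous proof) to the $\varphi$-invariant subgroup generated by a small neighborhood.

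The cleanest route, and the one I expect the paper to take, is: having $G^0\subset\AC$, observe that $\fg^{+,0}$ is solvable-by-nothing in the relevant sense — more precisely, $\fg^+$ is a nilpotent ideal, $\DC(\fg^{+,0})\subset\fg^{+,0}$, and the quotient dynamics on $\fg^{+,0}/\fg^+\cong\fg^0$ already has its group $G^0$ inside $\AC$. Then the argument from \cite{DaSilva1}, Theorem 3.7, goes through verbatim because that argument only uses solvability of $\fg^{+,0}$ and never touched the semisimple part — the semisimple part lives entirely in $\fg^0$, which has been dealt with in Proposition \ref{generalcase}. So I would write: since $\fg^{+,0}$ is $\DC$-invariant with $\fg^+$ a $\DC$-invariant nilpotent ideal and $G^0\subset\AC$, the proof of Theorem 3.7 of \cite{DaSilva1} applies with $G^0$ in place of the solvable $G^0$ there, yielding $G^{+,0}\subset\AC$. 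The main obstacle to watch for is verifying that the cited proof of \cite{DaSilva1} genuinely is insensitive to the presence of a semisimple factor inside $\fg^0$ — i.e. that it uses only (a) $\fg^+$ nilpotent, (b) $\DC$-invariance, (c) $G^0\subset\AC$, (d) openness and forward $\varphi$-invariance of $\AC$, and the layer-by-layer application of Proposition \ref{idealinvariance} and Lemma \ref{pointinvariance} — all of which are available here. If any step of that proof implicitly used global solvability, one would have to reprove that step by the contraction/filtration argument sketched above, which is routine but tedious.
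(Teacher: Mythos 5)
Your core argument is exactly the paper's proof: for $g\in G^+$ the contraction $\varphi_{-t}(g)\to e$, combined with openness and forward $\varphi$-invariance of $\AC$, gives $g\in\varphi_t(\AC)\subset\AC$ and hence $G^+\subset\AC$, after which $G^{+,0}=G^+G^0$, Proposition \ref{generalcase} and Lemma \ref{pointinvariance} finish the job. The filtration/induction machinery and the fallback appeal to Theorem 3.7 of \cite{DaSilva1} that you hedge with are unnecessary --- the short contraction argument you sketch in the middle of your second paragraph is all the paper uses.
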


\begin{proof}
Let $g\in G^+$. Since $\rme^{t\DC}|_{\fg^+}$ has only eigenvalues with positive real part and $\AC$ is open, there is $t>0$ great enough such that $\varphi_{-t}(g)\in\AC$ and consequently $g\in\varphi_{t}(\AC)\subset\AC$. Since $g\in G^+$ is arbitrary, we conclude that $G^+\subset\AC$. Moreover, by Proposition \ref{generalcase} $G^0\subset\AC$ and since $G^{+, 0}=G^+G^0$, Lemma \ref{pointinvariance} implies that $G^{+, 0}\subset\AC$ as desired.
\end{proof}

\bigskip

The proof of our main result uses the notion of reverse system. So, next we show some relationship between both systems: the linear one and its linear reverse.

By considering the linear vector field $\XC^*$ on $G$ whose flow is given by $\varphi_{\tau}^*:=\varphi_{-\tau}$ it is straightforward to see that the derivation $\DC^*$ associated with $\XC^*$ satisfies $\DC^*=-\DC$. The Lie subalgebras and Lie subgroups induced by the derivation $\DC^*$ are related with the ones induced by $\DC$ as
$$\fg_*^+=\fg^-, \;\;\;\;\;\fg^-_*=\fg^+,\;\;\; \mbox{ and }\;\;\;\fg^0_*=\fg^0$$
and
$$G_*^+=G^-, \;\;\;\;\;G^-_*=G^+,\;\;\; \mbox{ and }\;\;\;G^0_*=G^0.$$
Moreover, if we consider the linear system (\ref{linearsystem}) with drifts $\XC^*$, $\XC$ and the same right invariant vector fields, their respectives solutions are related by $\phi^*_{t, u}(g)=\phi_{-t, u}(g)$ which implies that $\AC^*_{\tau}=\varphi_{-\tau}((\AC_{\tau})^{-1})$. 

\begin{remark}
It is straighforward to see that the linear system (\ref{linearsystem}) is controllable if and only if $G=\AC\cap\AC^*$.
\end{remark}

With the previous analysis we get the main result of the paper.

\begin{theorem}
\label{controllability}
Let $G$ be a Lie group with finite semisimple center. Then, the linear system (\ref{linearsystem}) on $G$ is controllable if $e\in\inner\AC_{\tau_0}$ for some $\tau_0>0$ and $\DC$ has only eigenvalues with zero real part.
\end{theorem}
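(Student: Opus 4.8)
The plan is to reduce everything to Theorem \ref{generalcase2}, applied twice: once to the given system and once to its reverse. First I would unwind the spectral hypothesis on $\DC$: since every eigenvalue of $\DC$ has zero real part, the $\DC$-decomposition of $\fg$ collapses to $\fg=\fg^0$, so that $\fg^+=\fg^-=\{0\}$ and hence $G^+=G^-=\{e\}$, $G^0=G$, and $G^{+, 0}=G^+G^0=G$. Since we are assuming $e\in\inner\AC_{\tau_0}$ for some $\tau_0>0$, which is exactly the hypothesis of Theorem \ref{generalcase2}, that theorem gives $G^{+, 0}\subset\AC$, i.e. $\AC=G$.

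Next I would pass to the reverse system, i.e. the linear system with drift $\XC^*$ (flow $\varphi^*_\tau=\varphi_{-\tau}$) and the same right invariant vector fields $X^j$. Its associated derivation is $\DC^*=-\DC$, whose spectrum is $-\mathrm{Spec}(\DC)$ and therefore again consists only of eigenvalues with zero real part; and of course $G$ still has finite semisimple center. To apply Theorem \ref{generalcase2} to this system I need $e\in\inner\AC^*_{\sigma}$ for some $\sigma>0$. Here I would use the identity $\AC^*_\tau=\varphi_{-\tau}\bigl((\AC_\tau)^{-1}\bigr)$ recorded in the excerpt: taking $\tau=\tau_0$ and an open neighbourhood $V\subset\AC_{\tau_0}$ of $e$, the set $V^{-1}$ is an open neighbourhood of $e$ (inversion is a diffeomorphism of $G$ fixing $e$), and $\varphi_{-\tau_0}(V^{-1})$ is an open neighbourhood of $\varphi_{-\tau_0}(e)=e$ contained in $\varphi_{-\tau_0}\bigl((\AC_{\tau_0})^{-1}\bigr)=\AC^*_{\tau_0}$; hence $e\in\inner\AC^*_{\tau_0}$. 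Applying Theorem \ref{generalcase2} to the reverse system (whose $G^0$ is again all of $G$, by the same spectral argument) then yields $\AC^*=G$.

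Finally I would invoke the Remark stating that the linear system (\ref{linearsystem}) is controllable if and only if $G=\AC\cap\AC^*$. Since we have shown $\AC=G$ and $\AC^*=G$, this intersection is all of $G$, and controllability follows.

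I do not expect a genuinely hard step to remain once Theorem \ref{generalcase2} is in hand: the analytic core (the construction of the semigroup $\SC$ in the semisimple quotient $L=G_0/R$ and the appeal to Lemma 4.1 of \cite{SM3}) has already been absorbed into that theorem. The only point that calls for a moment's care is the transfer of the interior-point hypothesis from the forward system to the reverse one, which is why I would spell out the diffeomorphism argument above rather than treat it as self-evident.
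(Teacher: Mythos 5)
Your proposal is correct and follows essentially the same route as the paper: apply Theorem \ref{generalcase2} to the system and to its reverse, use the identity $\AC^*_{\tau}=\varphi_{-\tau}\bigl((\AC_{\tau})^{-1}\bigr)$ to transfer the interior-point hypothesis, note that the spectral assumption forces $G^{+,0}=G^0=G^{-,0}=G$, and conclude via $G=\AC\cap\AC^*$. The only difference is cosmetic (you invoke the spectral collapse before applying the theorem, the paper after), and your explicit diffeomorphism argument for $e\in\inner\AC^*_{\tau_0}$ just spells out what the paper asserts in one line.
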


\begin{proof}
Since $\AC^*_{\tau}=\varphi_{-\tau}((\AC_{\tau})^{-1})$ for any $\tau>0$ we have that $e\in\inner\AC_{\tau_0}$ for some $\tau_0>0$ if and only if $e\in\inner\AC^*_{\tau_0}$. By Theorem \ref{generalcase2} it follows that $G^{+, 0}\subset\AC$ and $G^{-, 0}=G^{+, 0}_*\subset\AC^*$. Since $\DC$ has only eigenvalues with zero real part, $G^{+, 0}=G^0=G^{-, 0}$ and consequently $G=G^0=\AC\cap\AC^*$ which implies that (\ref{linearsystem}) is controllable.
\end{proof}

\begin{remark}
For restricted systems, where $\Omega$ is a compact, convex subset of $\R^m$, Lemma 4.5.2 of \cite{FCWK} implies that the condition $e\in\inner\AC_{\tau_0}$ for some $\tau_0>0$ is equivalent to the openness of the reachable set. 
\end{remark}

\begin{example}
The classical linear system: consider $G=\R^d$ and the dynamic on $G$ determined by
$$\dot{x}(t)=Ax(t)+Bu(t); \;\;A\in\mathbb{R}^{d\times d}, B\in\mathbb{R}^{d\times m}, u\in\UC.$$
Since the right (left) invariant vector fields on the abelian Lie group $\R^d$ are given by constant vectors we can write the system as 
$$\dot{x}(t)=Ax(t)+\sum_{i=1}^mu_i(t)b_i, \hspace{.5cm} B=(b_1| b_2| \cdots|b_m)$$
showing that it is a linear system in the sense of (\ref{linearsystem}).  Of course $\partial\R^d=\mathrm{gl}(n, \R)$, the Lie algebra of the real matrices of order $d$.

In the book The Dynamics of Control \cite{FCWK} the authors proved that for a restricted linear system on the Euclidean space $\R^d$ satisfying the Kalman condition, there exists one and only one control set C with non empty interior. Recall that a control set $C$ is a maximal controlled invariant set such that $C\subset \cl(\AC(g))$ for every $g\in C$. The mentioned control set is given explicitely by
$$C = \cl(\AC) \cap \AC^*.$$

If we assume that $A$ has only eigenvalues with zero real part, then $\R^d = (\R^d)^0 = \AC\cap\AC^*$. It turns out that $C = \R^d$ and the system is controllable. Therefore, our main results is also a generalization for restricted linear systems from Euclidean spaces to Lie groups with finite semisimple center.
\end{example}

\begin{example}
A special class of linear control systems: Consider two Lie algebras $\fe$ and $\fh$ with respective connected Lie groups $E$ and $H$, and the direct product $\fg = \fe\times\fh$ with the canonical product Lie algebra structure. Let $X^{\fe, 1}, \ldots, X^{\fe, m}$ be right invariant vector fields of $\fe$ and $X^{\fh, 1}, \ldots, X^{\fh, n}$ be a basis of right invariant vector fields of $\fh$. Take any derivation $\DC^{\fe}$ of $\fe$ with associated linear vector field $\XC^{\fe}$. Therefore
$$\dot{g}(t) = X^{\fe}(e(t))+\sum_{j=1}^mu_j(t)X^{\fe, j}(e(t))+\sum_{j=1}^nv_j(t)X^{\fh, j}(h(t)),$$
is a linear control system that we call a $\fh$-homogeneous linear system on $G=E\times H$. Here $g(t) = (e(t),h(t))$ for any $t\in\R $ and $w = (u,v)$ belongs to $\UC \subset L^{\infty}(\R,\Omega\subset \R^m \times \R^n)$. Just observe that $\DC=(\DC^{\fe},0)$ is a derivation of $\fg$ and $\fg^0=\fg$.

\begin{remark} Obviously, a $\fh$-homogeneous linear system on $G$ is controllable if and only if the linear system is controllable when $v = 0$.
\end{remark}
In Jurdjevic's book \cite{VJ} the kinematic equations of a rolling two dimensional sphere $S$ over a revolving plane around the $z$-axis with constant angular velocity $\omega$, are stablished as follows: The assumption that the sphere roll without slipping implies that the movement is described by the center of $S$ as a curve $e(t)$ on $\R^2$ and by the family of orthogonal matrices $g(t)$, i.e., elements of the orthogonal group $SO(3)$, which transform the attached frame on $S(t)$ to the canonical coordinates on $R^3$ at the origin. So, the configuration space is the Lie group $G = \R^2\times \mathrm{SO}(3)$. Then, $\fe = \R^2$, $\fh = \fs\fo(3)$ and $\fg = \R^2\times \fs\fo(3)$ where the second component denotes the Lie algebra of skew-symmetric matrix of order three. After this analysis, it is obtained the linear system
$$\dot{g}(t) = X^{\fe}(e(t))+\sum_{j=1}^3v_j(t)X^{\fh, j}(h(t)),$$
a $\fs\fo(3)$-homogeneous linear system in our context. Precisely
$$
X^{\fh, 1}=\left(
\begin{array}{ccc}
0 & 0 & 0\\
0 & 0 & -1\\
0 & 1 & 0
\end{array}\right),\;
X^{\fh, 2}=\left(
\begin{array}{ccc}
0 & 0 & -1\\
0 & 0 & 0\\
1 & 0 & 0
\end{array}\right),\;
X^{\fh, 3}=\left(
\begin{array}{ccc}
0 & -1 & 0\\
1 & 0 & 0\\
0 & 0 & 0
\end{array}\right).
$$
and  
$$
\DC^{\fe}=\omega\left(
\begin{array}{cc}
0 & -1\\
1 & 0
\end{array}\right).$$
Of course $\DC=(\DC^{\fe},0)$ does not act on $\fh$. So, in order to study the controllability property of the rolling sphere it is necessary to consider an extension of the system: a $\fs\fo(3)$-homogeneous linear system on $G$ as follows
\begin{equation}
\label{Jurdjevic}
\dot{g}(t) =\XC^{\fe}(e(t)) + ub +\sum_{j=1}^3v_j(t)X^{\fh, j}(h(t)), \mbox{ with } \Omega= \R \times\R^3
\end{equation}

where $b\in\R^2$ is a vector such that the system satisfy the Kalman rank condition for $v = 0$. For example if $b = e_1$ we obtain 
$$\R^2=\mathrm{Span}\{b, \DC^{\fe}(b)\}.$$
The Lie group $G$ has finite semisimple center, the system satisfy the ad-rank condition and $\DC$ is a $\fg$-derivation with $\mathrm{Spec}(\DC) = \{0, \pm\omega i\}$. Thus, our main theorem applies to the $\fs\fo(3)$-homogeneous linear system showing that (\ref{Jurdjevic}) it is controllable.

In fact, at the origin and with $u = 0$ it is possible to obtain any configuration of $S(0)$ by using the controls $v_1,v_2$ and $v_3$. On the other hand, with $v = 0$ it is possible to connect any two elements of $\R^2$ in positive time.
\end{example}

\begin{example}
Let $G=SL(2,\mathbb{R)}$ the Lie group of determinant $1$ matrices with Lie
algebra $\mathfrak{sl}(2,\mathbb{R})$ the matrices of $0$ trace and consider the basis $\{X, Y, Z\}$ of $\mathfrak{sl}(2, \R)$ given by
$$
X=\left(
\begin{array}{cc}
1 & 0 \\ 
0 & -1%
\end{array}\right), \;\;\;
Y=\left(
\begin{array}{cc}
0 & 0 \\ 
1 & 0%
\end{array}\right), \;\mbox{ and }\;
Z=\left(
\begin{array}{cc}
0 & 1 \\ 
0 & 0%
\end{array}\right)
$$
that satisfies
$$[X, Y]=-2Y, \;\;\;[X, Z]=2Z \;\;\;\mbox{ and }\;\;\;[Z, Y]=X.$$ 

\begin{itemize}

\item[i)] In \cite{VASM} the authors show the existence of a local controllable
linear control system from the identity on $G$ such that the accessibility
set $\mathcal{A}$ is not a semigroup and then is not controllable (see Proposition 7 of \cite{JPh}). Precisely, let 
$$
\DC=\ad(X)\;\;\text{ and }\;\;H=\left(
\begin{array}{cc}
1 & 1 \\ 
1 & -1
\end{array}\right).
$$
and consider the linear system 
$$
\dot{g}(t)=\mathcal{X}(g(t))+uH(g(t)), \;\;\;u\in\UC, 
$$
where $\XC=\XC^{\DC}$ is the linear vector field associated with $\DC$.
We have 
$$
\mathrm{Span}\left \{H=X+Y+Z,\DC(H)=2(X-Y),\DC^{2}(H)=4Y\right\}=\mathfrak{sl}(2,\mathbb{R)} 
$$
and the rank condition follows. On the other hand, the center of $\mathrm{SL}(2,\mathbb{R})=\mathbb{Z}_{2}$ and we observe that $\mathrm{Spec}(\mathcal{D})=\{0, \pm 2\}$.

\item[ii)] Consider 

The derivation 
$$\DC=\ad(Y)=\left(\begin{array}{ccc}
0 & 0 & -1\\
2 & 0 & 0 \\
0 & 0& 0
\end{array}\right)$$
induces the linear vector field $\XC=\XC^{\DC}$ on $G$. Consider the linear system
$$\dot{g}(t)=\XC(g(t))+uZ(g), \;\;\;u\in\UC.$$
Since 
$$\mathrm{Span}\{Z, \DC(Z)=-X, \DC^2(Z)=-2Y\}=\mathfrak{sl}(2, \R)$$
and $\mathrm{Spec}(\DC)=0$ we have by Theorem \ref{controllability} that the linear system is controllable.

\end{itemize}
\end{example}

\end{document}